\newcommand{\andf}{\quad\hbox{and}\quad}
\def\Supp{\mathop{\rm Supp}\nolimits\ }
\newcommand{\newcom}{\newcommand}
\def\longformule#1#2{
\displaylines{ \qquad{#1} \hfill\cr \hfill {#2} \qquad\cr } }
\def\inte#1{
\displaystyle\mathop{#1\kern0pt}^\circ }
\newcommand{\w}[1]{\langle {#1} \rangle}
\newcom{\al}{\alpha}
\newcom{\de}{\delta}
\newcom{\be}{\beta}
\newcom{\s}{\sigma}
\newcom{\eps}{\epsilon}
\newcom{\ve}{\varepsilon}
\newcom{\ga}{\gamma}
\newcom{\Ga}{\Gamma}
\newcom{\ka}{\kappa}
\newcom{\Lam}{\Lambda}
\newcom{\lam}{\lambda}
\newcom{\vp}{\varphi}
\newcom{\Om}{\Omega}
\newcom{\om}{\omega}
\newcom{\Sig}{\Sigma}
\newcom{\sig}{\sigma}
\newcom{\tht}{\theta}
\newcom{\tri}{\triangle}
\newcom{\oo}{\infty}
\newcom{\h}{{\rm h}}
\newcom{\vphi}{\varphi}
\newcom{\cB}{{\mathcal B}}
\newcom{\cC}{{\mathcal C}}
\newcom{\cD}{{\mathcal D}}
\newcom{\cF}{{\mathcal F}}
\newcom{\cL}{{\mathcal L}}
\newcom{\cM}{{\mathcal M}}
\newcom{\cP}{{\mathcal P}}
\newcom{\cS}{{\mathcal S}}
\newcom{\cQ}{{\mathcal Q}}
\newcom{\cT}{{\mathcal T}}
\newcom{\cY}{{\mathcal Y}}
\newcom{\cZ}{{\mathcal Z}}
\newcom{\R}{\Bbb R}
\newcom{\T}{\Bbb T}
\newcom{\N}{\Bbb N}
\newcom{\Z}{\Bbb Z}
\newcom{\C}{\Bbb C}
\newcom{\E}{\Bbb E}
\let\wh=\widehat
\def\dive{\mathop{\rm div}\nolimits}
\let\e=\varepsilon
\newcom{\f}{\frac}
\newcom{\dint}{\displaystyle\int}
\newcom{\dsum}{\displaystyle\sum}
\newcom{\dlim}{\displaystyle\lim}
\newcom{\ov}{\overline}
\newcom{\wt}{\widetilde}
\newcom{\pa}{\partial}
\newcom{\p}{\partial}
\newcom\na{\nabla}
\newcom{\D}{\Delta}
\newcom\rto{\rightarrow}
\newcom\lto{\leftarrow}
\newcom\mto{\mapsto}
\newcom{\disp}{\displaystyle}
\newcom{\non}{\nonumber}
\newcom{\no}{\noindent}
\newcom{\QED}{$\square$}
\def\eqdefa{\buildrel\hbox{\footnotesize def}\over =}
\newcommand{\beq}{\begin{equation}}
\newcommand{\eeq}{\end{equation}}
\newcommand{\ben}{\begin{eqnarray}}
\newcommand{\een}{\end{eqnarray}}
\newcommand{\beno}{\begin{eqnarray*}}
\newcommand{\eeno}{\end{eqnarray*}}
\newtheorem{Def}{Definition}[section]
\newtheorem{lem}{Lemma}[section]
\newtheorem{rmk}{Remark}[section]
\renewcommand{\theequation}{\thesection.\arabic{equation}}
\newtheorem{theorem}{Theorem}[section]
\newtheorem{lemma}[theorem]{Lemma}
\begin{document}
\title[long time well-posedness of Prandtl system]
{Long time well-posdness of Prandtl system with small and analytic
initial data}

\author[P. ZHANG]{Ping Zhang}%
\address[P. ZHANG]
 {Academy of
Mathematics $\&$ Systems Science and  Hua Loo-Keng Key Laboratory of
Mathematics, The Chinese Academy of Sciences\\
Beijing 100190, CHINA } \email{zp@amss.ac.cn}
\author[Z. ZHANG]{Zhifei Zhang}\address[Z. ZHANG]
{School of  Mathematical Science, Peking University, Beijing 100871,
P. R. CHINA} \email{zfzhang@math.pku.edu.cn}

\date{4. Sept. 2014}

\begin{abstract} In this paper, we investigate the long time
existence and uniqueness of small solution to $d,$ for $d=2,3,$
dimensional Prandtl system with small initial data  which is
analytic in the horizontal variables. In particular, we prove that
 $d$ dimensional Prandtl system has a unique solution with the  life-span of which  is greater than
$\e^{-\f43}$ if both the initial data and the value on the boundary
of the tangential velocity of the outflow are of size $\e.$  We
mention that the tool developed in \cite{Ch04, CGP}  to make the
analytical type estimates and the special structure of the nonlinear
terms to this system play an essential role in the proof of this
result.\vspace{0.2cm}

\noindent{\bf Keywords:} Prandtl system, Littlewood-Payley theory,
life-span, energy method\vspace{0.1cm}

\noindent{\bf AMS Subject Classification (2000):} 35Q30, 76D05
\end{abstract}

\maketitle

\renewcommand{\theequation}{\thesection.\arabic{equation}}
\setcounter{equation}{0}
\section{Introduction}\label{sect1}

In this paper, we investigate long time well-posedness to the
following  Prandtl system in $\R_+\times\R^d_+ ,$ for $d=2,3,$ with
small and analytic initial data:
\begin{equation}\label{1.1}
\left\{
\begin{array}{ll}
\p_tu+u\cdot\na_{\rm h}u+v\p_yu-\p_{yy}u+\na_hp=0, \quad (t,x,y)\in\R_+\times\R^{d-1}\times\R_+, \\
\dive_{\rm h} u+\p_yv =0,\\
u|_{y=0}=0,\ \ v|_{y=0}=0,\quad \mbox{and}\quad \displaystyle\lim_{y\to\infty}u(t,x,y)=U(t,x),\\
u|_{t=0}=u_{0},
\end{array}
\right.
\end{equation}
where $u=u, \na_\h=\dive_{\rm h}=\p_x$ for $d=2,$ and
$u=(u^1,u^{2}), \na_{\rm h}=(\pa_{x_1}, \pa_{x_2}),$ $\dive_{\rm
h}=\pa_{x_1}+ \pa_{x_2}$ for $d=3.$ $(u,v)$ denotes the tangential
and normal velocities of the boundary layer flow, $\big(U(t,x),
p(t,x)\big)$ are the values on the boundary of the tangential
velocity and pressure of the outflow, which satisfies Bernoulli's
law \beno \p_tU+U\cdot\na_hU+\na_{\rm h}p=0\quad \mbox{in}\quad
\R_+\times\R^{d-1}. \eeno This system proposed by Prandtl \cite{Pra}
is a  model equation for the first  order approximation of the
velocity field near the boundary in the zero viscosity limit of the
initial boundary problem of  Navier-Stokes equations with the
non-slip boundary condition. One may check \cite{Olei, E} and
references therein for more introductions on boundary layer theory.
\smallskip

One of the key step to rigorously justify this inviscid limit of
Navier-Stokes system with non-slip boundary condition is to deal
with the well-posedness of the Prandtl system. Since there is no
horizontal diffusion in the $u$ equation of \eqref{1.1}, the
nonlinear term $v\p_y u$ (which almost behaves like $\p_xu\p_y u$)
loses one horizontal derivative in the process of energy estimate,
and therefore the question of whether or not the Prandtl system with
general data is well-posed in Sobolev spaces is still open.
Recently, G\'{e}rard-Varet and Dormy \cite{Ger1} proved the
ill-posedness in Sobloev spaces for the linearized Prandtl system
around non-monotonic shear flows. The nonlinear ill-posedness was
also established in \cite{Ger2, Guo} in the sense of non-Lipschtiz
continuity of the flow. Nevertheless, we have the following positive
results for two classes of special data.

$\bullet$ Under a monotonic assumption on the tangential velocity of
the outflow and $d=2$, Oleinik \cite{Olei} proved the local
existence and uniqueness of classical solutions to \eqref{1.1}. With
the additional ``favorable" condition on the pressure, Xin and Zhang
\cite{Xin} obtained the global existence of weak solutions to this
system. The main idea in \cite{Olei, Xin} is to use  Crocco
transformation. We refer to \cite{Alex, Mas} for recent  proofs and
generalizations of such kind of results which is  based on the
direct energy method.

$\bullet$ For the data which is analytic in $x,y$ variables,
Sammartino and Caflisch \cite{Caf} established the local
well-posedness result of \eqref{1.1}. Later, the analyticity in $y$
variable was removed by Lombardo, Cannone and Sammartino in
\cite{Can}. The main argument used in  \cite{Caf, Can} is to apply
the abstract Cauchy-Kowalewskaya (CK) theorem. We also mention a
recent well-posedness result of \eqref{1.1} for a class of data with
Gevrey regularity \cite{GM}. \smallskip

On the other hand, Chemin, Gallagher and Paicu \cite{CGP} (see also
\cite{mz1,mz2}) proved an interesting result concerning the global
well-posedness of three dimensional Navier-Stokes system with a
class of ``ill prepared data", which is slowly varying in the
vertical variable, namely of the form $\e x_3,$  and the critical
norm of which blows up as the small parameter goes to zero. The main
idea of the proof in \cite{CGP} ( see also \cite{mz1, mz2}) is that:
after a change of scale, one obtains anisotropic Navier-Stokes
system which has such diffusion term as $\D_\h+\e^3\p_{x_3}^2$ and
anisotropic pressure gradient of the form $-(\na_\h p,
\e^2\p_{x_3}p),$  therefore there is a loss of regularity in the
vertical variable in the classical Sobolev estimates and it is
natural to work this transformed problem with initial data in the
analytical spaces. However the main disadvantage of CK type argument
is that it does not provide either the explicit radius of
analyticity or the lifespan of the solution.  The main idea to
overcome those difficulties is to use the tool introduced by Chemin
\cite{Ch04} which consists in making analytical type estimates and
controlling the size of the analytic band simultaneously. One may
check \cite{Ch04,CGP, mz1, mz2} and the references therein
concerning related results on analytic solutions to classical
Navier-Stokes system. \smallskip

Motivated by \cite{Ch04, CGP, mz1, mz2}, we are going to investigate
the long time   well-posedness of Prandtl system with small and
analytic initial data. Since there is no horizontal diffusion in the
System \eqref{1.1}, this system looks like a  hyperbolic one. Hence,
it is natural to expect that the lifespan of the solutions to this
problem with initial data of size $\e$ should be of size
$O(\ve^{-1})$. Indeed, this is possible if we go through the proofs
of classical CK type argument step by step.  By making full use of
the vertical diffusion in \eqref{1.1} as well as $\dive_\h
u+\p_yv=0,$ we shall prove that vertical diffusion   slows down the
decreasing  of the analyticity radius so that the lifespan of the
solution becomes longer.

We remark that since the $y$ variable only lies in the upper half
line, it is natural to use the $L^2$ framework in process of
energy estimate of \eqref{1.1}. However, note that this approach
only gains half derivative, and to solve the well-posedness of
\eqref{1.1} (see \cite{Ch04}), one needs one more derivative in the
horizontal variables. In order to gain this additional one
derivative, we will have to use the weighted Chemin-Lerner spaces
introduced by Paicu and Zhang in \cite{PZ1}.

For simplicity, here we just consider the case of uniform outflow
where $U=\ve \bf{e}$ for some unit vector ${\bf e}\in \R^{d-1}$,
which implies $\na_{\rm h}p=0.$ Let $u^s(t,y)$ be determined by
\begin{equation}\label{1.2}
 \quad\left\{\begin{array}{l}
\displaystyle \p_tu^s-\p_{yy}u^s=0, \quad (t,y)\in\R_+\times\R_+,\\
\displaystyle u^s|_{y=0}=0\andf \lim_{y\to\infty}u^s(t,y)=\ve{\bf e}, \\
\displaystyle u^s|_{t=0}=\ve\chi(y)\bf{e},
\end{array}\right.
\end{equation}
where $\chi(y)\in C^\infty(\R)$, and $\chi(y)=0$ for $y\le 1$ and $\chi(y)=1$ for $y\ge 2$.

By substituting $u=u^s+w$ in \eqref{1.1} and using \eqref{1.2}, we
write \begin{equation}\label{1.3} \left\{
\begin{array}{ll}
\displaystyle \p_t
w+(w+u^s)\cdot\na_\h w-\int_0^y\dive_\h w\,dy'\p_yw-\int_0^y\dive_\h w\,dy'\p_yu^s-\p_{yy}w=0,\\
\displaystyle w|_{y=0}=0\quad\mbox{and}\quad \lim_{y\to\infty} w=0, \\
\displaystyle w|_{t=0}=u_0-\ve\chi{\bf e} \eqdefa w_0.
\end{array}\right.
\end{equation}

 Our main result is
stated as follows.

\begin{theorem}\label{th1.1}
{\sl Let $\delta>0$ and $\ve$ be a sufficiently small positive
constant. Assume that $w_0$ satisfies \beq\label{eq1.4}
\|e^{\f{1+y^2}8}e^{\delta|D|}w_0\|_{\cB^{\f{d-1}2,0}}\leq \ve, \eeq
then there exits a positive time $T_\ve$ which is of size greater
than $ \ve^{-\f43}$ so that (\ref{1.3}) has a  unique solution $w$
which satisfies \ben\label{f-class} e^{\Psi(t,y)}e^{\Phi(t,D)}w \in
\widetilde{L}^\infty_T(\cB^{\f{d-1}2,0}),\quad e^{\Psi(t,y)}
e^{\Phi(t,D)}\pa_y w\in \widetilde{L}^2_T(\cB^{\f{d-1}2,0}), \een
for any $T\leq T_\ve,$ and where  the functions $ \Psi(t,y),
\Phi(t,\xi)$ are determined by \eqref{eq2.6} and \eqref{eq2.7}
respectively.}
\end{theorem}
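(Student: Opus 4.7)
The plan is to run the analytic energy method of \cite{Ch04, CGP} adapted to the degenerate parabolic structure of the Prandtl system, combined with the weighted Chemin-Lerner calculus of \cite{PZ1}. Setting $w_\Phi(t):=e^{\Phi(t,D)}w(t)$ with $\Phi(t,\xi)=(\delta-\lam\theta(t))|\xi|$ as in \eqref{eq2.7}, the horizontal analyticity radius is tracked by $\theta(t)$; applying $e^{\Phi(t,D)}$ to $\p_t w$ produces the good term $\lam\dot\theta(t)\,|D_\h|\,w_\Phi$ on the left-hand side. Composing further with the Gaussian weight $e^{\Psi(t,y)}$ of \eqref{eq2.6}, the goal is to bound the profile $e^\Psi w_\Phi$ in $\wt L^\infty_T(\cB^{\f{d-1}2,0})$ together with $e^\Psi\p_y w_\Phi$ in $\wt L^2_T(\cB^{\f{d-1}2,0})$ uniformly in $T\le T_\ve$.

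First I would localize in the horizontal frequency by the dyadic operator $\Delta_k^\h$, multiply the resulting equation for $e^\Psi\Delta_k^\h w_\Phi$ by itself, and integrate over $\R^{d-1}\times\R_+$. Four favourable terms appear: the analytic damping $\lam\dot\theta(t)\|e^\Psi|D_\h|^{1/2}\Delta_k^\h w_\Phi\|_{L^2}^2$; the vertical-diffusion term $\|e^\Psi\p_y\Delta_k^\h w_\Phi\|_{L^2}^2$ from integration by parts in $\p_{yy}$; a coercive boundary-layer contribution produced by $\p_t\Psi<0$; and the vanishing of the boundary term at $y=0$ thanks to the Dirichlet condition. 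The specific form of $\Psi$ in \eqref{eq2.6} is tailored precisely so that the commutator term $|\p_y\Psi|^2 e^{2\Psi}|\Delta_k^\h w_\Phi|^2$ created when $\p_y$ hits the weight is dominated by these coercive gains.

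Next I would estimate the nonlinear and source terms using Bony's paraproduct decomposition in the horizontal variable and analytic product laws in weighted Chemin-Lerner spaces. In the three key contributions
\beno
(w+u^s)\cdot\na_\h w,\quad \Bigl(\int_0^y\dive_\h w\,dy'\Bigr)\p_y w,\quad \Bigl(\int_0^y\dive_\h w\,dy'\Bigr)\p_y u^s,
\eeno
the half horizontal derivative that is formally lost is absorbed by the analytic damping $\lam\dot\theta(t)\||D_\h|^{1/2}\cdot\|^2$, while a Hardy-type inequality of the form $\|e^\Psi\int_0^y\!f\,dy'\|_{L^2_y}\lesssim\|e^\Psi\langle y\rangle f\|_{L^2_y}$, coupled with the Gaussian decay of the weight, turns the vertical primitive into a bounded operator on the weighted space. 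After summing the resulting dyadic bounds with the weights $2^{k(d-1)/2}$ and taking square roots, this yields a schematic inequality of the form
\beno
\|e^\Psi w_\Phi\|_{\wt L^\infty_t(\cB^{\f{d-1}2,0})}+\|e^\Psi\p_y w_\Phi\|_{\wt L^2_t(\cB^{\f{d-1}2,0})}\le C\ve+C\Bigl(\int_0^t\dot\theta(\tau)\,\|e^\Psi w_\Phi(\tau)\|_{\cB^{\f{d-1}2,1}}\bigl(\ve+\|e^\Psi w_\Phi(\tau)\|_{\cB^{\f{d-1}2,0}}\bigr)d\tau\Bigr)^{1/2}.
\eeno

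Finally I would close the bound by choosing $\dot\theta(t)$ equal to the weighted analytic norm $\|e^\Psi w_\Phi(t)\|_{\cB^{\f{d-1}2,\f12}}$, so that the formal half-derivative loss is exactly compensated, and then running a continuity argument on the set $\{t\le T:\theta(t)\le\delta/(2\lam)\}$. The lifespan $T_\ve\gtrsim\ve^{-\f43}$ arises from balancing the linear source generated by $\p_y u^s$ (which is of size $\ve$ but has only limited integrability in $t$ coming from the heat self-similar profile) against the quadratic self-interaction of $w$: a H\"older argument in time produces the exponent $4/3$, improving on the trivial $\ve^{-1}$ one would get from a purely Cauchy-Kowalewskaya approach. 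The main obstacle I expect is exactly this linear term $\bigl(\int_0^y\dive_\h w\,dy'\bigr)\p_y u^s$, because $\p_y u^s$ is not small near the boundary and its precise interplay with the Gaussian weight $e^\Psi$ and with the choice of $\dot\theta$ must be tuned carefully in order to recover the announced $4/3$ exponent rather than a weaker one.
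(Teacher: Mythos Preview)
Your overall architecture --- horizontal frequency localization, the analytic damping term $\lam\dot\theta\,|D_\h|$, the Gaussian vertical weight satisfying \eqref{eq2.8}, Bony's decomposition in $x$, and a continuity argument on $\{\theta(t)<\delta/\lam\}$ --- matches the paper. The gap is in the last two paragraphs: your choice of $\dot\theta$ and your explanation of where the exponent $4/3$ comes from.

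In the paper $\dot\theta$ is \emph{not} a horizontal norm of $w_\Phi$; it is
\[
\dot\theta(t)=\w{t}^{1/4}\bigl(\|e^\Psi\p_yw_\Phi(t)\|_{\cB^{\frac{d-1}{2},0}}+\|e^\Psi\p_yu^s(t)\|_{L^2_{\rm v}}\bigr),
\]
built from the \emph{vertical} derivative. Two ingredients make this work. First, the Dirichlet condition $w|_{y=0}=0$ gives $w=\int_0^y\p_yw\,dy'$, so every $L^\infty_{\rm v}$ norm of $w$ appearing in the paraproduct estimates is bounded by $\|\p_yw\|_{L^1_{\rm v}}\le\|e^{-\Psi}\|_{L^2_{\rm v}}\|e^\Psi\p_yw\|_{L^2_{\rm v}}\lesssim\w{t}^{1/4}\|e^\Psi\p_yw\|_{L^2_{\rm v}}$; this manufactures exactly $\dot\theta(t)$ as the prefactor in each nonlinear term (including the one containing $\p_yu^s$), so that all of them land in $\|e^\Psi w_\Phi\|_{\wt L^2_{t,\dot\theta}(\cB^{d/2,0})}^2$ and are absorbed by taking $\lam$ large. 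No Hardy inequality for the vertical primitive is used. Second, the vertical diffusion gives the uniform-in-$t$ bound $\|e^\Psi\p_yw_\Phi\|_{\wt L^2_t(\cB^{(d-1)/2,0})}\le C\varepsilon$ (and $\|e^\Psi\p_yu^s\|_{L^2_t(L^2_{\rm v})}\le C\varepsilon$), so by Cauchy--Schwarz in time
\[
\theta(t)=\int_0^t\w{t'}^{1/4}\bigl(\|e^\Psi\p_yw_\Phi\|_{\cB^{(d-1)/2,0}}+\|e^\Psi\p_yu^s\|_{L^2_{\rm v}}\bigr)\,dt'\ \le\ C\w{t}^{3/4}\varepsilon,
\]
and $\theta(T^\ast)=\delta/\lam$ forces $T^\ast\gtrsim\varepsilon^{-4/3}$. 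With your proposed $\dot\theta$ equal to a norm of $w_\Phi$ itself you only have $L^\infty_t$ control of size $\varepsilon$, yielding $\theta(t)\le C\varepsilon t$ and hence merely $T_\varepsilon\gtrsim\varepsilon^{-1}$. The $4/3$ is therefore not a balance between the $u^s$ source and the quadratic self-interaction; it is the combination of the $L^2_t$ parabolic smoothing on $\p_yw$ with the $\w{t}^{1/4}$ growth of $\|e^{-\Psi(t,\cdot)}\|_{L^2_{\rm v}}$.
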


The definitions of the function spaces will be presented in Section
\ref{sect2}.

\begin{rmk}\label{rmk1.1} We make the following comments concerning this
theorem:

\begin{itemize}

\item[(1)] We remark that here we require our initial data to be analytic only
in the $x$ variables. Moreover, our method to prove Theorem
\ref{th1.1} also ensures the local well-posedness of \eqref{1.3}
with general analytic initial data of arbitrary size.

\item[(2)]
In view of the result by  E and Enquist \cite{EE} concerning  the
finite time blow-up  of classical solution to \eqref{1.1}, one may
not expect the global existence result for \eqref{1.1} with general
data without monotonicity assumption in \cite{Olei}. However,
whether the lifespan obtained in Theorem \ref{th1.1} is sharp  is a
very interesting question.

\item[(3)]
The condition (\ref{eq1.4}) can be relaxed to \beno
\|e^{\rho(1+y)}e^{\delta|D|}w_0\|_{\cB^{\f{d-1}2,0}}\leq \ve, \eeno
if we take $\Psi(t,y)=\rho\Big(\f {1+y} {(1+t)^\f12}\Big)$, where
$\rho(z)\in W^{2,\infty}(\R_+)$ is a linear function of $z$ for
$z\ge 1$ and satisfies \beno \rho'(z)z\ge
2\big(\rho''(z)+2\rho'(z)^2\big). \eeno

\end{itemize}

\end{rmk}

This paper is organized as follows. In the second section, we recall
some basic facts on Littlewood-Paley theory and function spaces we
are going to use. In the third section, we present the proof to the
existence part of Theorem \ref{th1.1}. In the fourth section, we
complete the uniqueness part of Theorem \ref{th1.1}.

\medbreak Let us end this introduction by the notations we shall use
in this context.\vspace{0.2cm}

For~$a\lesssim b$, we mean that there is a uniform constant $C,$
which may be different on different lines but be independent of
$\ve,$ such that $a\leq Cb$. $(a\ |\
b)_{L^2_+}\eqdefa\int_{\R^d_+}a(x,y)\bar{b}(x,y)\,dx\,dy$ stands for
the $L^2$ inner product of $a,b$ on $\R^d_+.$ For $X$ a Banach space
and $I$ an interval of $\R,$ we denote by $L^q(I;\,X)$ the set of
measurable functions on $I$ with values in $X,$ such that
$t\longmapsto\|f(t)\|_{X}$ belongs to $L^q(I)$ and
$L^q_+=L^q(\R^{d-1}\times\R_+).$ In particular,  we denote by
$L^p_T(L^q_{\rm h}(L^r_{\rm v}))$ the space $L^p([0,T];
L^q(\R_{x_{}}^{d-1};L^r(\R_{y}^+))).$  Finally, we denote
 $
\bigl\{d_{k}\bigr\}_{k\in\Z}$  to be  generic elements in the sphere
of $\ell^1(\Z)$.

\renewcommand{\theequation}{\thesection.\arabic{equation}}
\setcounter{equation}{0}

\section{Littlewood-Paley theory and functional framework}\label{sect2}

In the rest of this paper, we shall frequently  use Littlewood-Paley
decomposition in the horizontal variables $x$. Let us recall from
\cite{BCD} that \beq
\begin{split}
&\Delta_k^{\rm h}a=\cF^{-1}(\varphi(2^{-k}|\xi|)\widehat{a}),\qquad
S^{\rm h}_ka=\cF^{-1}(\chi(2^{-k}|\xi|)\widehat{a}),
\end{split} \label{1.3a}\eeq where and in all that follows, $\cF
a$ and $\widehat{a}$ always denote the partial  Fourier transform of
the distribution $a$ with respect to $x$ variables,  that is, $
\widehat{a}(\xi,y)=\cF_{x\to\xi}(a)(\xi,y),$
  and $\chi(\tau),$ ~$\varphi(\tau)$ are
smooth functions such that
 \beno
&&\Supp \varphi \subset \Bigl\{\tau \in \R\,/\  \ \frac34 \leq
|\tau| \leq \frac83 \Bigr\}\andf \  \ \forall
 \tau>0\,,\ \sum_{j\in\Z}\varphi(2^{-j}\tau)=1,\\
&&\Supp \chi \subset \Bigl\{\tau \in \R\,/\  \ \ |\tau|  \leq
\frac43 \Bigr\}\quad \ \ \ \andf \  \ \, \chi(\tau)+ \sum_{j\geq
0}\varphi(2^{-j}\tau)=1.
 \eeno

Let us also  recall the functional spaces we are going to use.

\begin{Def}\label{def1.2}
{\sl  Let~$s$ in~$\R$. For~$u$ in~${\mathcal S}_h'(\R^d_+),$ which
means that $u$ is in~$\cS'(\R^d_+)$ and
satisfies~$\lim_{k\to-\infty}\|S_k^{\rm h}u\|_{L^\infty}=0,$ we set
$$
\|u\|_{\cB^{s,0}}\eqdefa\big\|\big(2^{ks}\|\Delta_k^{\rm h}
u\|_{L^{2}_+}\big)_k\bigr\|_{\ell ^{1}(\Z)}.
$$
\begin{itemize}

\item
For $s\leq \frac{d-1}{2}$, we define $ \cB^{s,0}(\R^d_+)\eqdefa
\big\{u\in{\mathcal S}_h'(\R^d_+)\;\big|\; \|
u\|_{\cB^{s,0}}<\infty\big\}.$

\item
If $k$ is  a positive integer and if~$\frac{d-1}{2}+k< s\leq
\frac{d+1}{2}+k$, then we define~$ \cB^{s,0}(\R^d_+)$  as the subset
of distributions $u$ in~${\mathcal S}_h'(\R^d_+)$ such that
$\na_\h^\beta u$ belongs to~$ \cB^{s-k,0}(\R^d_+)$ whenever
$|\beta|=k.$
\end{itemize}
}
\end{Def}

In  order to obtain a better description of the regularizing effect
of the transport-diffusion equation, we need to use Chemin-Lerner
type spaces $\widetilde{L}^{\lambda}_T(\cB^{s,0}(\R^d_+))$.
\begin{Def}\label{def2.2}
{\sl Let $p\in[1,\,+\infty]$ and $T\in]0,\,+\infty]$. We define
$\widetilde{L}^{p}_T(\cB^{s,0}(\R^d_+))$ as the completion of
$C([0,T]; \,\cS(\R^d_+))$ by the norm
$$
\|a\|_{\widetilde{L}^{p}_T(\cB^{s,0})} \eqdefa \sum_{k\in\Z}2^{ks}
\Big(\int_0^T\|\Delta_k^{\rm h}\,a(t) \|_{L^2_+}^{p}\,
dt\Big)^{\frac{1}{p}}
$$
with the usual change if $r=\infty.$ }
\end{Def}

In order to overcome the difficulty that one can not use Gronwall's
type argument in the framework of Chemin-Lerner space
$\wt{L}^2_t(\cB^{s,0}),$ we also need to use the weighted
Chemin-Lerner norm, which was introduced by Paicu and Zhang in
\cite{PZ1}.

\begin{Def}\label{def1.1} {\sl Let $f(t)\in L^1_{\mbox{loc}}(\R_+)$
be a nonnegative function. We define \beq \label{1.4}
\|a\|_{\wt{L}^p_{t,f}(\cB^{s,0})}\eqdefa
\sum_{k\in\Z}2^{ks}\Bigl(\int_0^t f(t')\|\D_k^{\rm
h}a(t')\|_{L^2_+}^p\,dt'\Bigr)^{\f1p}. \eeq}
\end{Def}

 \medbreak
For the convenience of the readers, we recall the following anisotropic
Bernstein type lemma from \cite{CZ1, Pa02}:

\begin{lem} \label{lem:Bern}
 {\sl Let $\cB_{\rm h}$ be a ball
of~$\R^{d-1}_{\rm h}$, and~$\cC_{\rm h}$  a ring of~$\R^{d-1}_{\rm
h}$; let~$1\leq p_2\leq p_1\leq \infty$ and ~$1\leq q\leq \infty.$
Then there holds:

\smallbreak\noindent If the support of~$\wh a$ is included
in~$2^k\cB_{\rm h}$, then
\[
\|\partial_{x}^\alpha a\|_{L^{p_1}_{\rm h}(L^{q}_{\rm v})} \lesssim
2^{k\left(|\al|+(d-1)\left(\f 1 {p_2}-\f 1 {p_1}\right)\right)}
\|a\|_{L^{p_2}_{\rm h}(L^{q}_{\rm v})}.
\]

\smallbreak\noindent If the support of~$\wh a$ is included
in~$2^k\cC_{\rm h}$, then
\[
\|a\|_{L^{p_1}_{\rm h}(L^{q}_{\rm v})} \lesssim
2^{-kN}\sup_{|\al|=N} \|\partial_{x}^\al a\|_{L^{p_1}_{\rm
h}(L^{q}_{\rm v})}.
\]
}
\end{lem}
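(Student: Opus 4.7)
The plan is to prove both inequalities by reducing them to classical Bernstein-type estimates in the horizontal variable $x$, handling the vertical $L^q_{\rm v}$ component separately via Minkowski's integral inequality. The unifying observation is that, since the Fourier localization is imposed only in the horizontal variables, the multiplier encoding it acts as a pure convolution in $x$; I can then apply Young's inequality slice by slice in $y$ and re-assemble the mixed norm by Minkowski.

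For the first inequality I would choose $\wt\chi\in C_c^\infty(\R^{d-1})$ with $\wt\chi\equiv 1$ on $\cB_{\rm h}$ and write $\wh a(\xi,y)=\wt\chi(2^{-k}\xi)\,\wh a(\xi,y)$. Setting $h_k(x)\eqdefa 2^{k(d-1)}h(2^kx)$ with $h\eqdefa \cF^{-1}\wt\chi\in \cS(\R^{d-1})$, this gives the representation $\partial_x^\al a(\cdot,y)=(\partial_x^\al h_k)*_x a(\cdot,y)$. Applying Minkowski's integral inequality to commute the $L^q_{\rm v}$ norm with the horizontal convolution, followed by Young's inequality in $x$ with exponent $r$ satisfying $1+\tfrac{1}{p_1}=\tfrac{1}{r}+\tfrac{1}{p_2}$, yields the bound with constant $\|\partial_x^\al h_k\|_{L^r_{\rm h}}$. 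A one-line scaling computation gives $\|\partial_x^\al h_k\|_{L^r_{\rm h}}=2^{k(|\al|+(d-1)(1-1/r))}\|\partial^\al h\|_{L^r_{\rm h}}$, and combining with $1-\tfrac{1}{r}=\tfrac{1}{p_2}-\tfrac{1}{p_1}$ produces precisely the announced power of $2^k$.

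For the second inequality I would pick $\wt\vp\in C_c^\infty(\R^{d-1}\setminus\{0\})$ equal to $1$ on $\cC_{\rm h}$ and exploit the fact that $|\xi|$ is bounded away from $0$ on $\Supp\wt\vp$ to invert $|\xi|^{2N}$ there. The multinomial identity $|\xi|^{2N}=\sum_{|\al|=N}\binom{N}{\al}\xi^{2\al}$ supplies the algebraic decomposition
\[
\wt\vp(\xi)=\sum_{|\al|=N}(i\xi)^\al g_\al(\xi),\qquad g_\al(\xi)\eqdefa i^{-N}\binom{N}{\al}\frac{\xi^\al\wt\vp(\xi)}{|\xi|^{2N}},
\]
each $g_\al$ being smooth, compactly supported, and away from the origin. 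Rescaling and inverting the $x$-Fourier transform gives
\[
a(x,y)=2^{-kN}\sum_{|\al|=N}\bigl(G_{\al,k}*_x\partial_x^\al a(\cdot,y)\bigr)(x),\qquad G_{\al,k}\eqdefa \cF^{-1}\bigl(g_\al(2^{-k}\cdot)\bigr),
\]
whose $L^1_{\rm h}$ norms are independent of $k$ by scale invariance. Young's inequality with an $L^1$ kernel, combined once more with Minkowski in $y$, then delivers the stated estimate.

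I do not foresee a genuine obstacle in either half; both arguments are classical once the right representation is chosen. The only mildly anisotropic ingredient, and the reason the statement does not reduce verbatim to the scalar Bernstein lemma of \cite{BCD}, is the appeal to Minkowski's integral inequality to push the $L^q_{\rm v}$ norm through the horizontal convolution, which is legitimate because $q\geq 1$.
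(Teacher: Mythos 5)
Your proof is correct: the convolution representation with a rescaled cut-off, Young's inequality in $x$ combined with Minkowski's inequality in $y$ to handle the mixed norm, and the division by $|\xi|^{2N}$ on the ring are exactly the standard argument. The paper does not prove this lemma itself (it recalls it from \cite{CZ1, Pa02}), and your argument coincides with the proof given in those references, so there is nothing to add.
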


We shall constantly use the Bony's decomposition (see \cite{Bo}) for
the horizontal variables: \ben\label{Bony} fg=T^{\rm h}_fg+T^{\rm
h}_{g}f+R^{\rm h}(f,g), \een where \beno T^{\rm h}_fg=\sum_kS^{\rm
h}_{k-1}f\Delta_k^{\rm h}g,\quad R^{\rm
h}(f,g)=\sum_k\widetilde{\Delta}_k^{\rm h}f\Delta_{k}^{\rm h}g \eeno
with $\widetilde{\Delta}_k^{\rm h}f\eqdefa
\displaystyle\sum_{|k-k'|\le 1}\Delta_{k'}^{\rm h}f$.

As in \cite{Ch04,CGP, mz1, mz2}, for any locally bounded function
$\Phi$ on $\R^+\times \R^{d-1}$, we define \beq\label{eq2.4}
u_\Phi(t,x,y)=\cF_{\xi\to
x}^{-1}\bigl(e^{\Phi(t,\xi)}\widehat{u}(t,\xi,y)\bigr). \eeq We
introduce a key quantity $\theta(t)$ to describe the evolution of
the analytic band of $w:$
\begin{equation}\label{1.9}
 \quad\left\{\begin{array}{l}
\displaystyle \dot{\tht}(t)=\w{t}^\f14\big(\|e^\Psi \p_yw_\Phi(t)\|_{\cB^{\f{d-1}2,0}}+\|e^{\Psi}\p_yu^s(t)\|_{L^2_{\rm v}}\big),\\
\displaystyle \tht|_{t=0}=0.
\end{array}\right.
\end{equation}
Here $\w{t}=1+t,$  the phase function $\Phi$ is defined by
\beq\label{eq2.6} \Phi(t,\xi)\eqdefa (\de-\lam \tht(t))|\xi|, \eeq
and the weighted function $\Psi(t,y)$ is determined by \beq
\label{eq2.7} \Psi(t,y)\eqdefa \f{1+y^2}{8\w{t}}, \eeq which
satisfies \beq\label{eq2.8} \p_t\Psi(t,y)+2(\p_y\Psi(t,y))^2\leq 0.
\eeq

\setcounter{equation}{0}
\section{The proof of the existence part of Theorem \ref{th1.1}}

The general strategy to prove the existence result for a nonlinear
partial differential equation is first to construct an appropriate
approximate solutions, then perform uniform estimates for such
approximate solution sequence, and finally pass to the limit of the
approximate problem. For simplicity, here we only present the {\it a
priori} estimates for smooth enough solutions of \eqref{1.3} in the
analytical framework.

In view of \eqref{1.3}, \eqref{eq2.4} and \eqref{eq2.6},  it is easy
to observe that $w_\Phi$ verifies \beq\label{1.6}
\begin{split}
&\p_tw_\Phi+\lam\dot{\tht}(t)|D_h|w_\Phi+[w\cdot\na_\h w]_\Phi-\bigl[\int_0^y\dive_\h w\,dy'\p_yw\bigr]_\Phi\\
&\qquad\qquad\qquad\quad+u^s\cdot\na_\h w_\Phi-\int_0^y\dive_\h
w_\Phi\,dy'\p_yu^s-\p_{yy}w_\Phi=0, \end{split} \eeq where $|D_\h|$
denotes the Fourier multiplier with symbol $|\xi|.$

Let $\Phi(t,\xi)$ and $\Psi(t,y)$ be given by \eqref{eq2.6} and
\eqref{eq2.7} respectively. By applying the dyadic operator
$\D_k^{\rm h}$ to \eqref{1.6} and then taking the $L^2_+$ inner
product of the resulting equation with $e^{2\Psi}\D_k^{\rm
h}w_\Phi,$  we find \beq \label{1.8}
\begin{split}
\bigl(&e^\Psi\p_t\D_k^{\rm h}w_\Phi\ |\ e^\Psi\D_k^{\rm
h}w_\Phi\bigr)_{L^2_+}+\lam\dot{\tht}\bigl(e^\Psi|D_h|\D_k^{\rm
h}w_\Phi\ |\ e^\Psi\D_k^{\rm
h}w_\Phi\bigr)_{L^2_+}\\
&-\bigl(e^\Psi\p_{yy}\D_k^{\rm h}w_\Phi\ |\ e^\Psi\D_k^{\rm
h}w_\Phi\bigr)_{L^2_+}+\bigl(e^\Psi\D_k^{\rm h}[w\cdot\na_\h
w]_\Phi\ |\ e^\Psi\D_k^{\rm
h}w_\Phi\bigr)_{L^2_+}\\
&-\bigl(e^\Psi\D_k^{\rm h}\bigl[\int_0^y\dive_\h
w\,dy'\p_yw\bigr]_\Phi\ |\ e^\Psi\D_k^{\rm
h}w_\Phi\bigr)_{L^2_+}+\bigl(e^\Psi u^s\cdot\na_h\D_k^{\rm h}w_\Phi\
|\ e^\Psi\D_k^{\rm
h}w_\Phi\bigr)_{L^2_+}\\
&\qquad\qquad\qquad\qquad\qquad\qquad\qquad-\bigl(e^\Psi\int_0^y\dive_\h\D_k^{\rm
h}w_\Phi\,dy'\p_yu^s\ |\ e^\Psi\D_k^{\rm h}w_\Phi\bigr)_{L^2_+}=0.
\end{split}
\eeq In what follows, we shall always assume that $t<T^\ast$ with
$T^\ast$ being determined by \beq\label{1.8a} T^\ast\eqdefa
\sup\bigl\{\ t>0,\ \ \tht(t) <\de/\lam\bigr\}. \eeq So that by
virtue of \eqref{eq2.6}, for any $t<T^\ast,$ there holds the
following convex inequality \beq\label{1.8bb} \Phi(t,\xi)\leq
\Phi(t,\xi-\eta)+\Phi(t,\eta)\quad\mbox{for}\quad \forall\
\xi,\eta\in \R^{d-1}. \eeq

Let us now handle term by term in \eqref{1.8}.\vspace{0.2cm}

\no $\bullet$ \underline{Estimate of
$\int_0^t\bigl(e^\Psi\p_t\D_k^{\rm h}w_\Phi\ |\ e^\Psi\D_k^{\rm
h}w_\Phi\bigr)_{L^2_+}\,dt'$}\vspace{0.2cm}

We first get, by using integration by parts, that
\beno
\begin{split}
\bigl(e^\Psi\p_t\D_k^{\rm h}w_\Phi\ |\ e^\Psi\D_k^{\rm
h}w_\Phi\bigr)_{L^2_+}=\bigl(\p_t(e^\Psi\D_k^{\rm h}w_\Phi)\ |\
e^\Psi\D_k^{\rm h}w_\Phi\bigr)_{L^2_+}-\bigl(\pa_t\Psi(e^\Psi\D_k^{\rm
h}w_\Phi)\ |\ e^\Psi\D_k^{\rm h}w_\Phi\bigr)_{L^2_+},
\end{split}
\eeno Integrating the above equality over $[0,t]$ gives rise to
\beq\label{1.11}
\begin{split}
\int_0^t\bigl(e^\Psi\p_t\D_k^{\rm h}w_\Phi\ |\ e^\Psi\D_k^{\rm
h}w_\Phi\bigr)_{L^2_+}\,dt'=&\f12\|e^\Psi\D_k^{\rm
h}w_\Phi(t)\|_{L^2_+}^2-\f12\|e^{\f{1+y^2}8}\D_k^{\rm
h}e^{\delta|D|}w_0\|_{L^2_+}^2\\
&-\int_0^t\int_{\R^d_+}\pa_t\Psi|e^\Psi\D_k^{\rm
h}w_\Phi(t')|^2\,dx\,dy\,dt'.
\end{split}
\eeq

\no $\bullet$ \underline{Estimate of $\int_0^t\bigl(e^\Psi\D_k^{\rm
h}[w\cdot\na_hw]_\Phi\ |\ e^\Psi\D_k^{\rm h}w_\Phi\bigr)_{L^2_+}\,dt'$}\vspace{0.2cm}

Applying Bony's decomposition \eqref{Bony} for $w\cdot\na_\h w$ for
the $x$ variables gives \beno w\cdot\na_\h w=T^{\rm h}_w\na_\h
w+T^{\rm h}_{\na_\h w}w+R^{\rm h}(w,\na_\h w). \eeno Considering
\eqref{1.8bb} and the support properties to the Fourier transform of
the terms in $T^{\rm h}_w\na_\h w,$ we write
$$\longformule{ \int_0^t\bigl|\bigl(e^\Psi\D_k^{\rm h}\bigl[T^{\rm
h}_w\na_\h w\bigr]_\Phi\ |\ e^\Psi\D_k^{\rm
h}w_\Phi\bigr)_{L^2_+}\bigr|\,dt'}{{}\lesssim \sum_{|k'-k|\leq
4}\int_0^t\|S_{k'-1}^{\rm
h}w_\Phi(t')\|_{L^\infty_+}\|e^{\Psi}\D_{k'}^{\rm h}\na_\h
w_\Phi(t')\|_{L^2_+}\|e^{\Psi}\D_{k}^{\rm
h}w_\Phi(t')\|_{L^2_+}\,dt'.} $$ However note that $w|_{y=0}=0,$ one
has \beno \|S_{k'-1}^{\rm
h}w_\Phi(t')\|_{L^\infty_+}=&\bigl\|S_{k'-1}^{\rm
h}(\int_0^y\p_yw_\Phi(t')\,dy')\|_{L^\infty_+} \leq
\|\p_yw_\Phi(t')\|_{L^1_{\rm v}(L^\infty_{\rm h})}, \eeno whereas
applying Lemma \ref{lem:Bern}  and using \eqref{eq2.7} yields \beno
\begin{split}
\|\p_yw_\Phi(t')\|_{L^1_{\rm v}(L^\infty_{\rm h})}\lesssim &
\sum_{k\in\Z} 2^{\bigl(\f{d-1}2\bigr)k}\|\D_k^\h \p_y
w_\Phi(t')\|_{L^1_{\rm
v}(L^2_\h)}\\
\lesssim & \|e^{-\Psi(t')}\|_{L^2_{\rm
v}}\sum_{k\in\Z}2^{\bigl(\f{d-1}2\bigr)k}\|e^{\Psi}\D_k^\h \p_y
w_\Phi(t')\|_{L^2_+}\\
\lesssim &\w{t'}^{\f14}\|e^\Psi\p_yw_\Phi(t')\|_{\cB^{\f{d-1}2,0}},
\end{split}
\eeno which together with \eqref{1.9} ensures that \beq\label{1.12}
\begin{split} \|S_{k'-1}^{\rm h}w_\Phi(t')\|_{L^\infty_+}
\lesssim
 \dot{\tht}(t'). \end{split} \eeq
Whence  in view of Definition \ref{def1.1} and by applying
H\"older's inequality, we obtain \beq\label{1.12a}
\begin{split}
\int_0^t\bigl|&\bigl(e^\Psi\D_k^{\rm h}\bigl[T^{\rm h}_w\na_\h
w\bigr]_\Phi\ |\ e^\Psi\D_k^{\rm
h}w_\Phi\bigr)_{L^2_+}\bigr|\,dt'\\
&\lesssim \sum_{|k'-k|\leq
4}2^{k'}\Bigl(\int_0^t\dot{\tht}(t')\|e^\Psi\D_{k'}^{\rm
h}w_\Phi(t')\|_{L^2_+}^2\,dt'\Bigr)^{\f12}\Bigl(\int_0^t\dot{\tht}(t')\|e^\Psi\D_k^{\rm
h}w_\Phi(t')\|_{L^2_+}^2\,dt'\Bigr)^{\f12}\\
&\lesssim d_k^22^{-(d-1)k}\|e^\Psi
w_\Phi\|_{\wt{L}^2_{t,\dot{\tht}(t)}(\cB^{\f{d}2,0})}^2.
\end{split}
\eeq

Along the same line, it follows from  Lemma \ref{lem:Bern} and
\eqref{1.12} that \beno \|S_{k'-1}^{\rm h}\na_\h
w_\Phi(t)\|_{L^\infty_+}\lesssim 2^{k'}\|S_{k'-1}^{\rm
h}w_\Phi(t)\|_{L^\infty_+}\lesssim 2^{k'}\dot{\tht}(t), \eeno from
which and
$$\longformule{
\int_0^t\bigl|\bigl(e^\Psi\D_k^{\rm h}\bigl[T^{\rm h}_{\na_\h
w}w\bigr]_\Phi\ |\ e^\Psi\D_k^{\rm
h}w_\Phi\bigr)_{L^2_+}\bigr|\,dt'}{{}\lesssim \sum_{|k'-k|\leq
4}\int_0^t\|S_{k'-1}^{\rm h}\na_\h
w_\Phi(t')\|_{L^\infty_+}\|e^{\Psi}\D_{k'}^{\rm
h}w_\Phi(t')\|_{L^2_+}\|e^{\Psi}\D_{k}^{\rm
h}w_\Phi(t')\|_{L^2_+}\,dt',} $$ we  thus deduce by a similar
derivation of \eqref{1.12a} that \beno
\int_0^t\bigl|\bigl(e^\Psi\D_k^{\rm h}\bigl[T^{\rm h}_{\na_\h
w}w\bigr]_\Phi\ |\ e^\Psi\D_k^{\rm
h}w_\Phi\bigr)_{L^2_+}\bigr|\,dt'\lesssim d_k^22^{-(d-1)k}\|e^\Psi
w_\Phi\|_{\wt{L}^2_{t,\dot{\tht}(t)}(\cB^{\f{d}2,0})}^2. \eeno

Finally due to \eqref{1.8bb} and the support properties to the
Fourier transform of the terms in $R^{\rm h}(w,\na_\h w),$ we write
\beno
\begin{split}
\int_0^t\bigl|&\bigl(e^\Psi\D_k^{\rm h}\bigl[R^{\rm h}(w,\na_\h
w)\bigr]_\Phi\ |\ e^\Psi\D_k^{\rm
h}w_\Phi\bigr)_{L^2_+}\bigr|\,dt'\\
&\lesssim 2^{{\bigl(\f{d-1}2\bigr)k}}\sum_{k'\geq
k-3}\int_0^t\|\wt{\D}^{\rm h}_{k'}\na_\h w_\Phi(t')\|_{L^\infty_{\rm
v}(L^2_{\rm h})}\|e^{\Psi}\D_{k'}^{\rm
h}w_\Phi(t')\|_{L^2_+}\|e^{\Psi}\D_{k}^{\rm
h}w_\Phi(t')\|_{L^2_+}\,dt', \end{split} \eeno yet observing that
\beno
\begin{split} \|\wt{\D}^{\rm h}_{k'}\na_\h
w_\Phi(t')\|_{L^\infty_{\rm v}(L^2_{\rm h})}\lesssim &
2^{k'}\|\wt{\D}^{\rm h}_{k'}w_\Phi(t')\|_{L^2_{\rm h}(L^\infty_{\rm
v})}\lesssim 2^{k'}\|\wt{\D}^{\rm h}_{k'}\p_yw_\Phi(t')\|_{L^2_{\rm
h}(L^1_{\rm v})}\\
 \lesssim & 2^{k'}\w{t'}^{\f14}\|e^\Psi\wt{\D}^{\rm
h}_{k'}\p_yw_\Phi(t')\|_{L^2_+} \lesssim 2^{\bigl(\f{3-d}2\bigr)k}\w{t'}^{\f14}\|\p_yw_\Phi(t')\|_{\cB^{\f{d-1}2,0}}\\
\lesssim& 2^{\bigl(\f{3-d}2\bigr)k} \dot{\tht}(t').
\end{split} \eeno
we find
 \beno
\begin{split}
\int_0^t\bigl|&\bigl(e^\Psi\D_k^{\rm h}\bigl[R^{\rm h}(w,\na_\h
w)\bigr]_\Phi\ |\ e^\Psi\D_k^{\rm
h}w_\Phi\bigr)_{L^2_+}\bigr|\,dt'\\
&\lesssim 2^{{\bigl(\f{d-1}2\bigr)k}}\sum_{k'\geq
k-3}2^{\bigl(\f{3-d}2\bigr)k'}\Bigl(\int_0^t\dot{\tht}(t')\|e^{\Psi}\D_{k'}^{\rm
h}w_\Phi(t')\|_{L^2_+}^2\,dt'\Bigr)^{\f12}\\
&\qquad\qquad\qquad\qquad\qquad\qquad\qquad\qquad\times\Bigl(\int_0^t\dot{\tht}(t')\|e^{\Psi}\D_{k}^{\rm
h}w_\Phi(t')\|_{L^2_+}^2\,dt'\Bigr)^{\f12}, \end{split} \eeno which
together with Definition \ref{def1.1} ensures that \beq\label{1.12b}
\begin{split}
\int_0^t\bigl|&\bigl(e^\Psi\D_k^{\rm h}\bigl[R^{\rm h}(w,\na_\h
w)\bigr]_\Phi\ |\ e^\Psi\D_k^{\rm
h}w_\Phi\bigr)_{L^2_+}\bigr|\,dt'\\
&\lesssim d_k2^{-\f{k}2}\Bigl( \sum_{k'\geq
k-3}d_{k'}2^{-\bigl(\f{2d-3}2\bigr){k'}}\Bigr)\|e^\Psi
w_\Phi\|_{\wt{L}^2_{t,\dot{\tht}(t)}(\cB^{\f{d}2,0})}^2\\
&\lesssim d_k^22^{-(d-1)k}\|e^\Psi
w_\Phi\|_{\wt{L}^2_{t,\dot{\tht}(t)}(\cB^{\f{d}2,0})}^2. \end{split}
 \eeq

In summary,  we arrive at \beq\label{1.13}
\int_0^t\bigl|\bigl(e^\Psi\D_k^{\rm h}[w\cdot\na_hw]_\Phi\ |\
e^\Psi\D_k^{\rm h}w_\Phi\bigr)_{L^2_+}\bigr|\,dt'\lesssim
d_k^22^{-(d-1)k}\|e^\Psi
w_\Phi\|_{\wt{L}^2_{t,\dot{\tht}(t)}(\cB^{\f{d}2,0})}^2. \eeq

\no $\bullet$ \underline{Estimate of $\int_0^t\bigl(e^\Psi\D_k^{\rm
h}\bigl[\int_0^y\dive_\h w\,dy'\p_yw\bigr]_\Phi\ |\ e^\Psi\D_k^{\rm
h}w_\Phi\bigr)_{L^2_+}\,dt'$}\vspace{0.2cm}

Once again we first get, by applying Bony's decomposition
\eqref{Bony} for $\int_0^y\dive_\h w\,dy'\p_yw$ in the horizontal
variable $x,$ that \beno \int_0^y\dive_\h w\,dy'\p_yw=T^{\rm
h}_{\int_0^y\dive_\h w\,dy'}\p_yw+T^{\rm h}_{\p_yw}\int_0^y\dive_\h
w\,dy'+R^{\rm h}\bigl(\int_0^y\dive_\h w\,dy',\p_yw\bigr).
 \eeno
It is easy to observe that \beno \begin{split}
\int_0^t\bigl|\bigl(&e^\Psi\D_k^{\rm h}\bigl[T_{\int_0^y\dive_\h
w\,dy'}\p_yw\bigr]_\Phi\ |\ e^\Psi\D_k^{\rm
h}w_\Phi\bigr)_{L^2_+}\,dt'\\ \lesssim &\sum_{|k'-k|\leq
4}\int_0^t\|S_{k'-1}^{\rm h}\bigl(\int_0^y\dive_\h
w_\Phi(t')\,dy'\bigr)\|_{L^\infty_+}\|e^{\Psi}\D_{k'}^{\rm
h}\p_yw_\Phi(t')\|_{L^2_+}\|e^{\Psi}\D_{k}^{\rm
h}w_\Phi(t')\|_{L^2_+}\,dt'\\
\lesssim &\sum_{|k'-k|\leq
4}2^{-\bigl(\f{d-1}2\bigr){k'}}\int_0^t\|S_{k'-1}^{\rm
h}\bigl(\int_0^y\dive_\h
w_\Phi(t')\,dy'\bigr)\|_{L^\infty_+}\\
&\qquad\qquad\qquad\qquad\qquad\qquad\qquad\qquad\quad\times\|e^{\Psi}\p_yw_\Phi(t')\|_{\cB^{\f{d-1}2,0}}\|e^{\Psi}\D_{k}^{\rm
h}w_\Phi(t')\|_{L^2_+}\,dt'. \end{split} \eeno Then due to
\eqref{1.9}, one has
 \beno \begin{split}
\int_0^t\bigl|\bigl(&e^\Psi\D_k^{\rm h}\bigl[T_{\int_0^y\dive_\h
w\,dy'}\p_yw\bigr]_\Phi\ |\ e^\Psi\D_k^{\rm
h}w_\Phi\bigr)_{L^2_+}\,dt'\\
\lesssim &\sum_{|k'-k|\leq
4}2^{-\bigl(\f{d-1}2\bigr){k'}}\Bigl(\int_0^t\w{t'}^{-\f12}\dot{\tht}(t')\|S_{k'-1}^{\rm
h}\bigl(\int_0^y\dive_\h
w_\Phi(t')\,dy'\bigr)\|_{L^\infty_+}^2\,dt'\Bigr)^{\f12}\\
&\qquad\qquad\qquad\qquad\qquad\qquad\qquad\qquad\quad\times\Bigl(\int_0^t\dot{\tht}(t')\|e^{\Psi}\D_{k}^{\rm
h}w_\Phi(t')\|_{L^2_+}^2\,dt'\Bigr)^{\f12}.
\end{split}
\eeno However in view of Definition \ref{def1.1}, by applying Lemma
\ref{lem:Bern}, one has \beno
\begin{split}
\Bigl(\int_0^t\w{t'}^{-\f12}\dot{\tht}(t')&\|S_{k'-1}^{\rm
h}\bigl(\int_0^y\dive_\h w_\Phi(t')\,dy'\bigr)\|_{L^\infty_+}^2\,dt'\Bigr)^{\f12}\\
\lesssim & \sum_{\ell\leq
k'-2}2^{\bigl(\f{d+1}2\bigr){\ell}}\Bigl(\int_0^t\w{t'}^{-\f12}\dot{\tht}(t')\|\D_{\ell}^{\rm
h}w_\Phi(t')\|_{L^1_{\rm v}(L^2_{\rm
h})}^2\,dt'\Bigr)^{\f12}\\\lesssim & \sum_{\ell\leq
k'-2}2^{\bigl(\f{d+1}2\bigr){\ell}}\Bigl(\int_0^t\dot{\tht}(t')\|e^{\Psi}\D_{\ell}^{\rm
h}w_\Phi(t')\|_{L^2_+}^2\,dt'\Bigr)^{\f12}\\
\lesssim & d_{k'}2^{\f{k'}2}\|e^\Psi
w_\Phi\|_{\wt{L}^2_{t,\dot{\tht}(t)}(\cB^{\f{d}2,0})}.
\end{split}
\eeno Hence we obtain \beno \int_0^t\bigl|\bigl(e^\Psi\D_k^{\rm
h}\bigl[T_{\int_0^y\dive_\h w\,dy'}\p_yw\bigr]_\Phi\ |\
e^\Psi\D_k^{\rm h}w_\Phi\bigr)_{L^2_+}\bigr|\,dt'\lesssim
d_{k}^22^{-(d-1)k}\|e^\Psi
w_\Phi\|_{\wt{L}^2_{t,\dot{\tht}(t)}(\cB^{\f{d}2,0})}^2. \eeno By
the same manner, we write \beno \begin{split}
\int_0^t\bigl|\bigl(&e^\Psi\D_k^{\rm
h}\bigl[T_{\p_yw}\int_0^y\dive_\h w\,dy'\bigr]_\Phi\ |\
e^\Psi\D_k^{\rm h}w_\Phi\bigr)_{L^2_+}\bigr|\,dt'\\ \lesssim
&\sum_{|k'-k|\leq 4}\int_0^t\|e^\Psi S_{k'-1}^{\rm
h}(\p_yw_\Phi(t'))\|_{L^2_{\rm v}(L^\infty_{\rm h})}\\
&\qquad\qquad\qquad\times \|\D_{k'}^{\rm h}\int_0^y\dive_\h
w_\Phi(t')\,dy'\|_{L^\infty_{\rm v}(L^2_{\rm
h})}\|e^{\Psi}\D_{k}^{\rm
h}w_\Phi(t')\|_{L^2_+}\,dt'\\
\lesssim &\sum_{|k'-k|\leq
4}2^{k'}\int_0^t\|e^\Psi\p_yw_\Phi(t')\|_{\cB^{\f{d-1}2,0}}\|\D_{k'}^{\rm
h}w_\Phi(t')\|_{L^1_{\rm v}(L^2_{\rm h})}\|e^{\Psi}\D_{k}^{\rm
h}w_\Phi(t')\|_{L^2_+}\,dt'\\
\lesssim &\sum_{|k'-k|\leq
4}2^{k'}\int_0^t\dot{\tht}(t')\|e^\Psi\D_{k'}^{\rm
h}w_\Phi(t')\|_{L^2_+}\|e^{\Psi}\D_{k}^{\rm
h}w_\Phi(t')\|_{L^2_+}\,dt',
\end{split}
\eeno which  along with a similar derivation of \eqref{1.12a} leads
to \beno \int_0^t\bigl|\bigl(e^\Psi\D_k^{\rm
h}\bigl[T_{\p_yw}\int_0^y\dive_\h  w\,dy'\bigr]_\Phi\ |\
e^\Psi\D_k^{\rm h}w_\Phi\bigr)_{L^2_+}\bigr|\,dt'\lesssim
d_{k}^22^{-(d-1)k}\|e^\Psi
w_\Phi\|_{\wt{L}^2_{t,\dot{\tht}(t)}(\cB^{\f{d}2,0})}^2. \eeno
Finally, it follows from Lemma \ref{lem:Bern}  that
\beno\begin{split} \int_0^t\bigl|\bigl(&e^\Psi\D_k^{\rm
h}\bigl[R^{\rm h}\bigl(\int_0^y\dive_\h
w\,dy',{\p_yw}\bigr)\bigr]_\Phi\ |\ e^\Psi\D_k^{\rm
h}w_\Phi\bigr)_{L^2_+}\bigr|\,dt'\\ \lesssim
&2^{\bigl(\f{d-1}2\bigr)k}\sum_{k'\geq k-3}\int_0^t\|{\D}_{k'}^{\rm
h}(\int_0^y\dive_\h w_\Phi(t')\,dy')\|_{L^\infty_{\rm v}(L^2_{\rm
h})}\\
&\qquad\qquad\qquad\qquad\times \|e^\Psi\wt{\D}_{k'}^{\rm
h}\p_yw_\Phi(t')\|_{L^2_+}\|e^{\Psi}\D_{k}^{\rm
h}w_\Phi(t')\|_{L^2_+}\,dt'
\end{split}
\eeno yet since \beno
\begin{split}
\|{\D}_{k'}^{\rm h}(\int_0^y\dive_\h
w_\Phi(t')\,dy')\|_{L^\infty_{\rm v}(L^2_{\rm h})}\lesssim  &
2^{k'}\|{\D}_{k'}^{\rm h} w_\Phi(t')\|_{L^1_{\rm v}(L^2_{\rm h})}\\
\lesssim & 2^{k'}\w{t'}^{\f14}\|e^{\Psi}{\D}_{k'}^{\rm h}
w_\Phi(t')\|_{L^2_+},
\end{split}
\eeno we deduce by a similar derivation of  \eqref{1.12b} that
\beno\begin{split} \int_0^t\bigl|\bigl(&e^\Psi\D_k^{\rm
h}\bigl[R^{\rm h}\bigl(\int_0^y\dive_\h
w\,dy',{\p_yw}\bigr)\bigr]_\Phi\ |\ e^\Psi\D_k^{\rm
h}w_\Phi\bigr)_{L^2_+}\bigr|\,dt'\\
\lesssim & 2^{\bigl(\f{d-1}2\bigr)k}\sum_{k'\geq k-
3}2^{\bigl(\f{3-d}2\bigr)k'}\int_0^t\w{t'}^{\f14}\|\p_yw_\Phi(t')\|_{\cB^{\f{d-1}2,0}}\|e^\Psi\D_{k'}^{\rm
h}w_\Phi(t')\|_{L^2_+}\|e^{\Psi}\D_{k}^{\rm
h}w_\Phi(t')\|_{L^2_+}\,dt'\\
\lesssim & 2^{\bigl(\f{d-1}2\bigr)k}\sum_{k'\geq k-
3}2^{\bigl(\f{3-d}2\bigr)k'}\Bigl(\int_0^t\dot{\theta}(t')\|e^\Psi\D_{k'}^{\rm
h}w_\Phi(t')\|_{L^2_+}^2\,dt'\Bigr)^{\f12}\Bigl(\int_0^t\dot{\theta}(t')\|e^{\Psi}\D_{k}^{\rm
h}w_\Phi(t')\|_{L^2_+}^2\,dt'\Bigr)^{\f12}\\
 \lesssim & d_{k}^22^{-(d-1)k}\|e^\Psi
w_\Phi\|_{\wt{L}^2_{t,\dot{\tht}(t)}(\cB^{\f{d}2,0})}^2.
\end{split}
\eeno

As a consequence, we achieve \beq\label{1.14}
\int_0^t\bigl|\bigl(e^\Psi\D_k^{\rm h}\bigl[\int_0^y\dive_\h
w\,dy'\p_yw\bigr]_\Phi\ |\ e^\Psi\D_k^{\rm
h}w_\Phi\bigr)_{L^2_+}\bigr|\,dt'\lesssim d_{k}^22^{-(d-1)k}\|e^\Psi
w_\Phi\|_{\wt{L}^2_{t,\dot{\tht}(t)}(\cB^{\f{d}2,0})}^2. \eeq

\no $\bullet$ \underline{Estimate of
$\int_0^t\bigl(e^\Psi\int_0^y\D_k^{\rm h}\dive_\h
w_\Phi\,dy'\p_yu^s\ |\ e^\Psi\D_k^{\rm
h}w_\Phi\bigr)_{L^2_+}\,dt'$}\vspace{0.2cm}

By virtue of Definition \ref{def1.1} and \eqref{1.9}, we get
\beno
\begin{split}
\int_0^t\bigl|\bigl(&e^\Psi\int_0^y\D_k^{\rm h}\dive_\h
w_\Phi\,dy'\p_yu^s\ |\ e^\Psi\D_k^{\rm
h}w_\Phi\bigr)_{L^2_+}\bigr|\,dt'\\
\lesssim &2^k\int_0^t\|e^{\Psi}\p_yu^s(t')\|_{L^2_{\rm v}}\|\D_k^{\rm h}w_\Phi(t')\|_{L^1_{\rm
v}(L^2_{\rm h})}\|e^\Psi\D_k^{\rm
h}w_\Phi(t')\|_{L^2_+}\,dt'\\
\lesssim & 2^k\int_0^t\dot{\tht}(t')\|e^\Psi\D_k^{\rm
h}w_\Phi(t')\|_{L^2_+}\|e^\Psi\D_k^{\rm h}w_\Phi(t')\|_{L^2_+}\,dt',
\end{split} \eeno
from which, a similar derivation of \eqref{1.12a} gives rise to \beq
\label{1.15} \int_0^t\bigl|\bigl(e^\Psi\int_0^y\p_x\D_k^{\rm
h}w_\Phi\,dy'\p_yu^s\ |\ e^\Psi\D_k^{\rm
h}w_\Phi\bigr)_{L^2_+}\bigr|\,dt'\lesssim d_{k}^22^{-(d-1)k}\|e^\Psi
w_\Phi\|_{\wt{L}^2_{t,\dot{\tht}(t)}(\cB^{\f{d}2,0})}^2. \eeq

\no $\bullet$ \underline{Estimate of
$-\int_0^t\bigl(e^\Psi\p_{yy}\D_k^{\rm h}w_\Phi\ |\ e^\Psi\D_k^{\rm
h}w_\Phi\bigr)_{L^2_+}\,dt'$}\vspace{0.2cm}

We get, by using integration by parts, that \beq\label{1.16}
\begin{split}
&-\int_0^t\bigl(e^\Psi\p_{yy}\D_k^{\rm h}w_\Phi\ |\ e^\Psi\D_k^{\rm
h}w_\Phi\bigr)_{L^2_+}\,dt'\\
&\quad=\|e^\Psi\D_k^{\rm
h}\p_yw_\Phi\|_{L^2_t(L^2_+)}^2+2\int_0^t\int_{\R^2_+}\pa_y\Psi e^{2\Psi}\D_k^{\rm
h}w_\Phi(t')\D_k^{\rm h}\p_yw_\Phi(t')\,dx\,dy\,dt'\\
&\quad\geq \f12\|e^\Psi\D_k^{\rm
h}\p_yw_\Phi\|_{L^2_t(L^2_+)}^2-2\int_0^t\int_{\R^d_+}(\pa_y\Psi)^2|e^\Psi\D_k^{\rm
h} w_\Phi(t')|^2\,dt'.
\end{split}
\eeq

Now we are in a position to complete the existence part of Theorem
\ref{th1.1}:\\

\begin{proof}[Proof of the existence part of Theorem \ref{th1.1}]
It is easy to observe that \beno \bigl(e^\Psi u^s\cdot\na_h\D_k^{\rm
h}w_\Phi\ |\ e^\Psi\D_k^{\rm h}w_\Phi\bigr)_{L^2_+}=0,\eeno and
\beno \lam\dot{\tht}(t)\bigl(e^\Psi|D_h|\D_k^{\rm h}w_\Phi\ |\
e^\Psi\D_k^{\rm h}w_\Phi\bigr)_{L^2_+}\geq
c\lam\dot{\tht}(t)2^k\|e^\Psi\D_{k}^{\rm h}w_\Phi(t)\|_{L^2_+}^2.
\eeno

Therefore in view of \eqref{eq2.8}, by integrating \eqref{1.8} over
$[0,t]$ and by resuming the Estimates \eqref{1.11}, \eqref{1.13},
\eqref{1.14}, \eqref{1.15} and \eqref{1.16} into the resulting
inequality, we conclude \beno
\begin{split} &\|e^\Psi\D_k^{\rm
h}w_\Phi\|_{L^\infty_t(L^2_+)}^2+c\lam
2^k\int_0^t\dot{\tht}(t')\|\D_k^{\rm
h}w_\Phi(t')\|_{L^2_+}^2\,dt'+\|e^\Psi\D_k^{\rm
h}\p_yw_\Phi\|_{L^2_t(L^2_+)}^2\\
&\qquad\qquad\qquad\qquad\qquad\leq \|e^{\f{1+y^2}8}\D_k^{\rm
h}e^{\delta|D|}w_0\|_{L^2_+}^2+Cd_k^22^{-(d-1)k} \|e^\Psi
w_\Phi\|_{\wt{L}^2_{t,\dot{\tht}(t)}(\cB^{\f{d}2,0})}^2.\end{split}
\eeno Taking square root of the above inequality and  multiplying
the resulting inequality by $2^{\bigl(\f{d-1}2\bigr)k}$ and summing
over $k\in\Z,$ we find for any $t\leq T^\ast$ \beq
\label{1.17}\begin{split} &\|e^\Psi
w_\Phi\|_{\wt{L}^\infty_t(\cB^{\f{d-1}2,0})}+c\sqrt{\lam}\|e^\Psi
w_\Phi\|_{\wt{L}^2_{t,\dot{\tht}(t)}(\cB^{\f{d}2,0})}+\|e^\Psi\p_yw_\Phi\|_{\wt{L}^2_t(\cB^{\f{d-1}2,0})}\\
&\qquad\qquad\qquad\qquad\qquad\leq \|e^{\f{1+y^2}8}
e^{\delta|D_x|}w_0\|_{\cB^{\f{d-1}2,0}}+\sqrt{C}\|e^\Psi
w_\Phi\|_{\wt{L}^2_{t,\dot{\tht}(t)}(\cB^{\f{d}2,0})}. \end{split}
\eeq Taking $\lam$ to be a large enough positive constant so that
$c^2\lam\geq C$ in \eqref{1.17} gives rise to \beq \label{1.18}
\|e^\Psi
w_\Phi\|_{\wt{L}^\infty_t(\cB^{\f{d-1}2,0})}+\|e^\Psi\p_yw_\Phi\|_{\wt{L}^2_t(\cB^{\f{d-1}2,0})}\leq
C\|e^{\f{1+y^2}8} e^{\delta|D|}w_0\|_{\cB^{\f{d-1}2,0}}. \eeq Hence
in view of \eqref{1.9}, we infer from Lemma \ref{lem:outflow} below
that \beq\label{1.19}
\begin{split}
\tht(t)\leq & \int_0^t\w{t'}^\f14\|e^\Psi
\p_yw_\Phi(t')\|_{\cB^{\f{d-1}2,0}}\,dt'+\int_0^t\w{t'}^\f14\|e^\Psi\p_yu^s(t')\|_{L^2_v}\,dt'\\
\leq &
C\w{t}^{\f34}\bigl(\|e^\Psi\p_yw_\Phi\|_{\wt{L}^2_t(\cB^{\f{d-1}2,0})}+\|e^\Psi\p_yu^s\|_{L^2_t(L^2_v)}\bigr)\\
\leq &C\w{t}^\f34\Bigl(\|e^{\f{1+y^2}8}
e^{\delta|D|}w_0\|_{\cB^{\f{d-1}2,0}}+\ve\Bigr).
\end{split}
\eeq In particular, under the assumption of \eqref{eq1.4},
\eqref{1.19} ensures that \beno \sup_{t\in
[0,\tau^\ast_\ve]}\tht(t)\leq \f{\de}{2\lam}\quad \textrm{for
}\tau^\ast_\ve\eqdefa \Bigl(\f{\delta}{4\lam C\ve}\Bigr)^{\f43}-1.
\eeno Therefore in view of \eqref{1.8a}, this ensures that
$T^\ast\geq \tau^\ast_\ve,$ and \eqref{1.18} implies
\eqref{f-class}. This completes the existence part of Theorem
\ref{th1.1}.
\end{proof}

It remains to prove the following lemma.

\begin{lemma}\label{lem:outflow}
{\sl Let $u^s$ be the global solution of (\ref{1.2}). Then one has
 \beq\label{eq3.15} \int_0^t\|e^\Psi\p_yu^s(t')\|_{L^2_v}^2\,dt'\le C\ve^2
 \eeq
for any $t\ge 0$.}
\end{lemma}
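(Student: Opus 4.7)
My strategy is a weighted $L^2$ energy estimate performed on the equation satisfied by $h:=\p_y u^s$. Differentiating \eqref{1.2} once in $y$ shows that $h$ itself solves the heat equation $\p_t h-\p_{yy}h=0$ on $\R_+\times\R_+$, with initial data $h|_{t=0}=\ve\chi'(y)$ (smooth, supported in $[1,2]$, and bounded by $C\ve$), and with the Neumann boundary condition
\[
\p_y h\big|_{y=0}=\p_{yy}u^s\big|_{y=0}=\p_t u^s\big|_{y=0}=0,
\]
inherited from the fact that $u^s|_{y=0}=0$ is independent of $t$. At $y=\infty$, $h$ has Gaussian decay since $u^s\to\ve\mathbf{e}$.

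The core computation is then to multiply the $h$-equation by $e^{2\Psi}h$ and integrate over $y>0$, in complete analogy with the derivation of \eqref{1.11} and \eqref{1.16}. The Neumann condition eliminates the boundary contribution at $y=0$; the cross term $2\int_0^\infty e^{2\Psi}\p_y\Psi\,h\,\p_y h\,dy$ produced by integration by parts is absorbed by Cauchy–Schwarz into half of $\|e^\Psi\p_yh\|_{L^2_v}^2$, leaving a remainder $2\int(\p_y\Psi)^2 e^{2\Psi}h^2\,dy$; combined with the time-derivative term $-\int\p_t\Psi\,e^{2\Psi}h^2\,dy$, the whole contribution is $\int(\p_t\Psi+2(\p_y\Psi)^2)e^{2\Psi}h^2\,dy\le 0$ by the key pointwise inequality \eqref{eq2.8}. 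The resulting differential inequality
\[
\tfrac{d}{dt}\|e^\Psi h\|_{L^2_v}^2+\|e^\Psi\p_y h\|_{L^2_v}^2\le 0,
\]
integrated from $0$ to $t$ together with the trivial bound $\|e^{\Psi(0,\cdot)}h_0\|_{L^2_v}^2=\ve^2\int_1^2 e^{(1+y^2)/4}|\chi'(y)|^2\,dy\le C\ve^2$, produces $\|e^\Psi h(t)\|_{L^2_v}^2\le C\ve^2$ uniformly in $t$ and $\int_0^t\|e^\Psi\p_y h\|_{L^2_v}^2\,dt'\le C\ve^2$.

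The main obstacle is promoting the uniform-in-$t$ control of $\|e^\Psi h\|_{L^2_v}$ into the time-integrated control \eqref{eq3.15}. The natural remedy is a weighted Poincaré-type inequality on the half-line adapted to the anti-Gaussian weight $e^{2\Psi}=e^{(1+y^2)/(4\w{t})}$ (which is not integrable, so constants do not lie in $L^2_v(e^{2\Psi})$), schematically $\|e^\Psi h\|_{L^2_v}^2\le C(t)\|e^\Psi\p_y h\|_{L^2_v}^2$ with a constant $C(t)$ mild enough that, fed back into the differential inequality above, it forces $\|e^\Psi h(t)\|_{L^2_v}^2$ to decay fast enough to keep its time integral $O(\ve^2)$. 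Calibrating this Poincaré inequality sharply against the time-dependent weight is the delicate point. An alternative, more computational route avoids Poincaré by using the explicit odd-reflection representation
\[
\p_y u^s(t,y)=\ve\int_1^2\chi'(y')\bigl(G_t(y-y')+G_t(y+y')\bigr)\,dy',\quad G_t(z)=(4\pi t)^{-1/2}e^{-z^2/(4t)},
\]
and evaluating $\int_0^\infty e^{2\Psi(t,y)}|\p_y u^s|^2\,dy$ by completing the square in the Gaussian exponents before integrating in $t$.
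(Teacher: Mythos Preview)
Your energy argument on $h=\p_y u^s$ is correct as far as it goes, but the gap you flag is real and your proposed Poincar\'e fix does not close. Keeping the exact terms instead of using Cauchy--Schwarz, the identity reads
\[
\tfrac12\tfrac{d}{dt}\|e^\Psi h\|_{L^2_v}^2+\|e^\Psi\p_y h\|_{L^2_v}^2
=\int_0^\infty\bigl(\p_t\Psi+\p_{yy}\Psi+2(\p_y\Psi)^2\bigr)e^{2\Psi}h^2\,dy,
\]
and for $\Psi=(1+y^2)/(8\w{t})$ one has $\p_t\Psi+2(\p_y\Psi)^2=-\tfrac{1}{8\w{t}^2}$ while $\p_{yy}\Psi=\tfrac{1}{4\w{t}}$, so the right-hand side is \emph{nonnegative}. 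With Cauchy--Schwarz instead, the best you extract is $\tfrac{d}{dt}\|e^\Psi h\|_{L^2_v}^2\le-\tfrac{1}{4\w{t}^2}\|e^\Psi h\|_{L^2_v}^2$, whose integrating factor $\exp\bigl(\int_0^t\tfrac{dt'}{4\w{t'}^2}\bigr)$ is bounded; hence $\|e^\Psi h(t)\|_{L^2_v}^2$ stays $O(\ve^2)$ but does not decay, and its time integral grows linearly. A Poincar\'e inequality of the form $\|e^\Psi h\|_{L^2_v}^2\le C\w{t}\,\|e^\Psi\p_y h\|_{L^2_v}^2$ (the scaling forced by $\Psi$) would only reproduce the same $\w{t}^{-1}$ damping rate, which is again non-integrable. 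So this route, as stated, does not reach \eqref{eq3.15}.

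The paper avoids this obstacle by working one $y$-derivative \emph{lower}: using the odd-reflection heat kernel it writes $\p_y u^s=\tfrac{\ve}{2}\,\p_y(u^s_++u^s_-)$, where $u^s_\pm$ solve the free heat equation on the \emph{whole line} $\R$ with compactly supported initial data $\chi_1(\pm y)$. The same weighted energy estimate, applied to $u^s_\pm$ rather than to their derivatives, gives
\[
\|e^\Psi u^s_\pm(t)\|_{L^2}^2+\|e^\Psi\p_y u^s_\pm\|_{L^2_t(L^2)}^2\le 2\|e^{(1+y^2)/8}\chi_1(\pm\cdot)\|_{L^2}^2,
\]
so the desired quantity $\int_0^t\|e^\Psi\p_y u^s\|_{L^2_v}^2\,dt'$ appears as the \emph{dissipation} term and is bounded directly by $C\ve^2$. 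The passage to the whole line is what makes this possible: $u^s$ itself is not in $L^2_v(e^{2\Psi})$ (it does not vanish at infinity), and $u^s-\ve{\bf e}$ has a nonzero Dirichlet trace at $y=0$ that produces an uncontrolled boundary term; the auxiliary $u^s_\pm$ sidestep both issues. The idea you are missing, in short, is to antidifferentiate before running the energy estimate, so that the target norm sits in the dissipative slot rather than in the energy slot. Your ``alternative route'' via the explicit kernel and completion of squares is viable in principle, but you have not carried it out; the antidifferentiation trick is the clean way to finish.
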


\begin{proof}\,Indeed, it is easy to observe from (\ref{1.2}) that  \beno u^s(t,y)=\f \ve
{2\sqrt{\pi}t}\int_0^\infty\Big(e^{-\f {(y-y')^2} {4t}}-e^{-\f
{(y+y')^2} {4t}}\Big)\chi(y')dy'{\bf e}. \eeno Due to the choice of
$\chi$, taking derivative with respect to $y$ gives
\begin{align*}
\pa_y u^s(t,y)&=\f \ve {2\sqrt{\pi}t}\int_0^\infty\Big(e^{-\f {(y-y')^2} {4t}}+e^{-\f {(y+y')^2} {4t}}\Big)\pa_y\chi(y')dy'{\bf e}\\
&=\f \ve {4\sqrt{\pi}t}\int_{-\infty}^{+\infty}\Big(e^{-\f {(y-y')^2} {4t}}+e^{-\f {(y+y')^2} {4t}}\Big)\pa_y\chi(y')dy'{\bf e}\\
&=\pa_y\f \ve {4\sqrt{\pi}t}\int_{-\infty}^{+\infty}\Big(e^{-\f
{(y-y')^2} {4t}}-e^{-\f {(y+y')^2} {4t}}\Big)\chi_1(y')dy'{\bf e}\\
&\eqdefa \f{\ve}2\p_y\bigl(u^s_+(t,y)+u^s_-(t,y)\bigr),
\end{align*}
for some $\chi_1(y)\in C_c^\infty(\R)$.

Notice that $u^s_\pm(t,y)$ verify
\begin{equation}\label{eq3.16}
 \quad\left\{\begin{array}{l}
\displaystyle \p_tu^s_\pm(t,y)-\p_{yy}u^s_\pm(t,y)=0\quad \mbox{in}\quad \R^+\times\R,\\
\displaystyle u^s_\pm |_{t=0}=\chi_1(\pm y),
\end{array}\right.
\end{equation}
Taking $L^2$ inner product of \eqref{eq3.16} with $e^{2\Psi}u^s_\pm$
and using integration by parts, we obtain \beno
\begin{split}
0=&\bigl(e^{\Psi}\p_tu^s_\pm\  |\ e^\Psi
u^s_\pm\bigr)_{L^2}-\bigl(e^{\Psi}\p_{yy}u^s_\pm(t,y)\  |\ e^\Psi
u^s_\pm\bigr)_{L^2}\\
=&\f12\f{d}{dt}\|e^{\Psi}u^s_\pm(t)\|_{L^2}^2+\|e^\Psi\p_yu^s_\pm(t)\|_{L^2}^2\\
&-\int_{\R}\p_t\Psi|e^\Psi u^s_\pm(t)|^2\,dy+2\int_{\R}\p_y\Psi
e^\Psi\p_yu^s_\pm e^\Psi u^s_\pm(t)\,dy,
\end{split}
\eeno which together \eqref{eq2.8} ensures that \beno
\begin{split}
\f12\Bigl(&\|e^{\Psi}u^s_\pm(t)\|_{L^2}^2+\|e^\Psi\p_yu^s_\pm\|_{L^2_t(L^2)}^2\Bigr)\\
\leq
&\|e^{\f{1+y^2}8}\chi_1(\pm
y)\|_{L^2}^2+\int_0^t\int_{\R}\bigl(\p_t\Psi-2(\p_y\Psi)^2\bigr)|e^\Psi
u^s_\pm(t')|^2\,dy\,dt'\\
\leq &\|e^{\f{1+y^2}8}\chi_1(\pm y)\|_{L^2}^2.
\end{split}
\eeno And hence \eqref{eq3.15} follows.
\end{proof}

\setcounter{equation}{0}
\section{The proof of the uniqueness part of Theorem \ref{th1.1}}

This section is devoted to the proof of the uniqueness part of
Theorem \ref{th1.1}. Let $w^1$ and $w^2$ be two solutions of
(\ref{1.3}) obtained by Theorem \ref{th1.1}. We denote $W\eqdefa
w^1-w^2$. Then in view of \eqref{1.3}, $W$ verifies
\begin{equation}\label{eq4.1}
 \quad\left\{\begin{array}{l}
\displaystyle \p_tW+u^s\cdot\na_\h W-\int_0^y\dive_\h\cdot W\,dy'\p_yu^s-\p_{yy}W+F=0,\\
\displaystyle W|_{y=0}=0,\qquad \lim_{y\to\infty} W=0, \\
\displaystyle W|_{t=0}=0,
\end{array}\right.
\end{equation}
where \beno F=w^1\cdot\na_\h W+W\cdot\na_hw^2-\int_0^y\dive_\h
w^{1}\,dy'\p_yW-\int_0^y\dive_\h W\,dy'\p_yw^2. \eeno Let
$\theta^i(t), i=1,2,$ be determined respectively by
\begin{equation}\label{eq4.2}
 \quad\left\{\begin{array}{l}
\displaystyle \dot{\tht^i}(t)=\w{t}^\f14\big(\|e^\Psi \p_yw_\Phi^i(t)\|_{\cB^{\f{d-1}2,0}}+\|e^{\Psi}\p_yu^s(t)\|_{L^2_{\rm v}}\big),\\
\displaystyle \tht^i|_{t=0}=0,
\end{array}\right.
\end{equation}
 and the phase function $\wt{\Phi}$ is defined by
 \beq\label{eq4.3} \begin{split}
 &\Theta(t)\eqdefa (\tht^1+\tht^2)(t),\quad
\wt{\Phi}(t,\xi)\eqdefa \Bigl(\f \de 2-\lam \Theta(t)\Bigr) |\xi| \andf\\
&\Phi^i(t,\xi)\eqdefa (\de-\lam \tht^i(t))|\xi|,\quad\mbox{for}\
i=1,2.
\end{split} \eeq
In what follows, we shall always take $t$ so small  that \beno \f
\de 2-\lam \Theta(t)\ge 0, \eeno so that there holds \beno
\wt{\Phi}(t,\xi)\leq
\wt{\Phi}(t,\xi-\eta)+\wt{\Phi}(t,\eta)\quad\mbox{for}\quad \forall\
\xi,\eta\in \R^{d-1}. \eeno

 Since $\wt{\Phi}(t)\leq
\min\bigl(\Phi_1(t), \Phi_2(t)\bigr),$ we have for $i=1,2,$
\beq\label{eq4.4}
\begin{split}
&\|e^{\Psi}w^i_{\wt{\Phi}}\|_{\wt{L}^\infty_t(\cB^{\f{d-1}2,0})}\leq
\|e^{\Psi}w^i_{\Phi^i}\|_{\wt{L}^\infty_t(\cB^{\f{d-1}2,0})}\andf\\
&\|e^{\Psi}\p_yw^i_{\wt{\Phi}}\|_{\wt{L}^2_t(\cB^{\f{d-1}2,0})}\leq
\|e^{\Psi}\p_yw^i_{\Phi^i}\|_{\wt{L}^2_t(\cB^{\f{d-1}2,0})}.
\end{split} \eeq
Furthermore, it follows from Definition \ref{def2.2} and
\eqref{eq2.4} that \beno
\|e^{\Psi}w^i_{\wt{\Phi}}\|_{\wt{L}^\infty_t(\cB^{\f{d+1}2,0})}=\sum_{k\in\Z}2^{\bigl(\f{k+1}2\bigr)k}\|e^\Psi\D_k^\h
w^i_{\wt{\Phi}}\|_{L^\infty_t(L^2_+)}, \eeno and for each time $t,$
one has \beno
\begin{split}
2^k\|e^\Psi\D_k^\h w^i_{\wt{\Phi}}(t)\|_{L^2_+}^2=
&2^k\int_{0}^\infty\int_{\R^{d-1}}e^{2\Psi}\varphi^2(2^{-k}|\xi|)e^{2\wt{\Phi}(t,\xi)}|\widehat{w^i}(t,\xi,y)|^2\,d\xi\,dy\\
\leq
&2\int_{0}^\infty\int_{\R^{d-1}}e^{2\Psi}\varphi^2(2^{-k}|\xi|)e^{2\Phi^i(t,\xi)}|\xi|e^{-\f{\de}2|\xi|}|\widehat{w^i}(t,\xi,y)|^2\,d\xi\,dy\\
\leq
&C_\de\int_{0}^\infty\int_{\R^{d-1}}e^{2\Psi}\varphi^2(2^{-k}|\xi|)e^{2\Phi^i(t,\xi)}|\widehat{w^i}(t,\xi,y)|^2\,d\xi\,dy\\
=& C_\de \|e^\Psi \D_\h^k w^i_{\Phi^i}(t)\|_{L^2_+}^2,
\end{split}
\eeno which yields \beq\label{eq4.4a}
\|e^{\Psi}w^i_{\wt{\Phi}}\|_{\wt{L}^\infty_t(\cB^{\f{d+1}2,0})}\leq
C_\de \|e^{\Psi}w^i_{\Phi^i}\|_{\wt{L}^\infty_t(\cB^{\f{d-1}2,0})}.
\eeq In fact, this inequality \eqref{eq4.4a} motivates us to
introduce the phase function $\wt{\Phi}(t,\xi)$ in \eqref{eq4.3}.

For $\wt{\Phi}(t,\xi)$ given by \eqref{eq4.3}, in view of
\eqref{eq4.1}, one has \beno \p_tW_{\wt{\Phi}}+\lam
\dot{\Theta}(t)|D_h|W_{\wt{\Phi}}+u^s\cdot\na_\h
W_{\wt{\Phi}}-\int_0^y\dive_\h
W_{\wt{\Phi}}\,dy'\p_yu^s-\p_{yy}W_{\wt{\Phi}}+F_{\wt{\Phi}}=0.
\eeno Then for $\Psi(t,y)$ given by \eqref{eq2.6}, by applying
$\D_k^\h$ to the above equation and then taking $L^2$ inner product
of the resulting equation with $e^{2\Psi}\D_k^\h W_{\wt{\Phi}},$ we
obtain \beno
\begin{split}
\bigl(&e^{\Psi}\D_k^{\rm h}\p_tW_{\widetilde{\Phi}}\ |\
e^{\Psi}\D_k^{\rm h}W_{\widetilde{\Phi}}\bigr)_{L^2_+}+\lam
\dot{\Theta} \bigl(e^{\Psi}|D_\h|\D_k^\h W_{\wt{\Phi}}\ |\ \D_k^\h
W_{\wt{\Phi}}\bigr)_{L^2_+}\\
&+\bigl(e^{\Psi}u^s\cdot\na_\h\D_k^\h W_{\wt{\Phi}}\ |\
e^{\Psi}\D_k^\h W_{\wt{\Phi}}\bigr)_{L^2_+}-\bigl(e^{\Psi}\D_k^\h
\p_{yy}W_{\wt{\Phi}}\ |\ e^{\Psi}\D_k^\h W_{\wt{\Phi}}\bigr)_{L^2_+}\\
&-\bigl(e^{\Psi}\int_0^y\D_k^\h\dive_\h W_{\wt{\Phi}}\,dy'\p_yu^s\
|\ e^{\Psi}\D_k^\h W_{\wt{\Phi}}\bigr)_{L^2_+}
+\bigl(e^{\Psi}\D_k^\h F_{\wt{\Phi}} \ |\
e^{\Psi}\D_k^\h W_{\wt{\Phi}}\bigr)_{L^2_+},
\end{split}
\eeno from which and similar derivations of \eqref{1.11},
\eqref{1.16}, we  infer \beq\label{eq4.5}
\begin{split}
&\|e^{\Psi}\D_k^{\rm
h}W_{\widetilde{\Phi}}\|_{L^\infty_t(L^2_+)}^2+c\lambda
2^k\int_0^t\dot{\Theta}(t')\|e^{\Psi}\D_k^{\rm
h}W_{\widetilde{\Phi}}(t')\|_{L^2_+}^2\,dt'+
\|e^{\Psi}\D_k^{\rm h}\pa_yW_{\widetilde{\Phi}}\|_{L^2_t(L^2_+)}^2\\
&\quad\le \int_0^t\Bigl(\bigl(e^{\Psi}\int_0^y\D_k^\h\dive_\h
W_{\wt{\Phi}}\,dy'\p_yu^s\ |\ e^{\Psi}\D_k^\h
W_{\wt{\Phi}}\bigr)_{L^2_+}\\
&\qquad\qquad\qquad\qquad\qquad\qquad\qquad -\bigl(e^{\Psi}\D_k^{\rm
h} F_{\wt{\Phi}}\ |\ e^{\Psi}\D_k^{\rm
h}W_{\widetilde{\Phi}}\bigr)_{L^2_+}\Bigr)(t')\,dt'.
\end{split}
\eeq

In what follows, we handle term by term above.\vspace{0.2cm}

\no $\bullet$ \underline{Estimate of $\int_0^t\bigl(e^{\Psi}\D_k^\h
(w^1\cdot\na_\h W)_{\wt{\Phi}}\ |\ e^{\Psi}\D_k^\h
W_{\wt{\Phi}}\bigr)_{L^2_+}\,dt'$}\vspace{0.2cm}

We first get, by using Bony's decomposition \eqref{Bony} for
$w^1\cdot\na_\h W$ for the horizontal variables, that \beno
w^1\cdot\na_\h W=T^\h_{w^1}\na_\h W+T^\h_{\na_\h W}w^1+R^\h(w^1,
\na_\h W).
 \eeno
By using a similar derivation of \eqref{1.12} and \eqref{1.12a},  we
find \beno \|e^\Psi\D_k^\h [T^\h_{w^1}\na_\h
W]_{\wt{\Phi}}(t)\|_{L^2_+}\lesssim 2^k\sum_{|k'-k|\leq
4}\w{t}^{\f14}\|e^\Psi\p_yw^1_{\wt{\Phi}}(t)\|_{\cB^{\f{d-1}2,0}}\|e^\Psi\D_{k'}^\h
W_{\wt{\Phi}}(t)\|_{L^2_+},
 \eeno
which together with \eqref{eq4.3} and \eqref{eq4.4} implies that
\ben\label{eq:LH}
\begin{split}
\int_0^t\bigl|&\bigl(e^{\Psi}\D_k^\h [T^\h_{w^1}\na_\h
W]_{\wt{\Phi}}\
|\ e^{\Psi}\D_k^\h W_{\wt{\Phi}}\bigr)_{L^2_+}\bigr|\,dt'\\
\lesssim &2^{k}\sum_{|k'-k|\leq
4}\int_0^t\dot{\Theta}(t')\|e^\Psi\D_{k'}^\h
W_{\wt{\Phi}}(t')\|_{L^2_+}\|e^\Psi\D_{k}^\h
W_{\wt{\Phi}}(t')\|_{L^2_+}\,dt'\\
\lesssim & 2^{k}\sum_{|k'-k|\leq
4}\Bigl(\int_0^t\dot{\Theta}(t')\|e^\Psi\D_{k'}^\h
W_{\wt{\Phi}}(t')\|_{L^2_+}^2\,dt'\Bigr)^{\f12}\Bigl(\int_0^t\dot{\Theta}(t')\|e^\Psi\D_{k}^\h
W_{\wt{\Phi}}(t')\|_{L^2_+}^2\,dt'\Bigr)^{\f12}\\
\lesssim &
d_k^22^{-(d-1)k}\|e^\Psi W_{\wt{\Phi}}\|_{\wt{L}^2_{t,\dot{\Theta}}(\cB^{\f{d}2,0})}^2.
\end{split}
\een

Note that according to \eqref{eq4.2} and \eqref{eq4.3}, we get, by a
similar derivation of \eqref{1.12}, that \beq\label{eq4.6}
\begin{split}
\|\D_k^\h w^1_{\wt{\Phi}}(t)\|_{L^\infty_{\rm v}(L^2_\h)}\lesssim&
\w{t}^{\f14}\|e^\Psi\D_k^\h\p_yw^1_{\wt{\Phi}}(t)\|_{L^2_+}\\
\lesssim &
2^{-\bigl(\f{d-1}2\bigr)k}\w{t}^{\f14}\|e^\Psi\p_yw^1_{\wt{\Phi}}(t)\|_{\cB^{\f{d-1}2,0}}\lesssim
2^{-\bigl(\f{d-1}2\bigr)k}\dot{\Theta}(t),
\end{split}
\eeq while it follows from Lemma \ref{lem:Bern}  that \beno
\begin{split}
\Bigl(\int_0^t\dot{\Theta}(t')&\|e^\Psi S_{k'-1}^\h\na_\h
W_{\wt{\Phi}}(t')\|_{L^2_{\rm v}(L^\infty_\h)}^2\,dt'\Bigr)^{\f12}\\
\lesssim &\sum_{\ell\leq
k'-2}2^{\bigl(\f{d+1}2\bigr)k'}\Bigl(\int_0^t
\dot{\Theta}(t')\|e^\Psi\D_\ell^\h
W_{\wt{\Phi}}(t')\|_{L^2_+}^2\,dt'\Bigr)^{\f12}\\
\lesssim &
d_k2^{\f{k'}2}\|e^\Psi W_{\wt{\Phi}}\|_{\wt{L}^2_{t,\dot{\Theta}}(\cB^{\f{d}2,0})}.
\end{split}
\eeno
Hence we obtain
\beno
\begin{split}
\int_0^t\bigl|&\bigl(e^{\Psi}\D_k^\h [T^\h_{\na_\h
W}w^1]_{\wt{\Phi}}\ |\ e^{\Psi}\D_k^\h
W_{\wt{\Phi}}\bigr)_{L^2_+}\bigr|\,dt'\\
\lesssim & \sum_{|k'-k|\leq 4}\int_0^t\|e^\Psi S_{k'-1}^\h\na_\h
W_{\wt{\Phi}}(t')\|_{L^2_{\rm v}(L^\infty_\h)}\|\D_{k'}^\h
w^1_{\wt{\Phi}}(t')\|_{L^\infty_{\rm v}(L^2_\h)}\|e^\Psi\D_k^\h
W_{\wt{\Phi}}(t')\|_{L^2_+}\,dt'\\
\lesssim &\sum_{|k'-k|\leq
4}2^{\bigl(\f{1-d}2\bigr)k'}\int_0^t\dot{\Theta}(t')\|e^\Psi
S_{k'-1}^\h\na_\h W_{\wt{\Phi}}(t')\|_{L^2_{\rm
v}(L^\infty_\h)}\|e^\Psi\D_k^\h W_{\wt{\Phi}}(t')\|_{L^2_+}\,dt'\\
\lesssim &\sum_{|k'-k|\leq
4}2^{\bigl(\f{1-d}2\bigr)k'}\Bigl(\int_0^t\dot{\Theta}(t')\|e^\Psi
S_{k'-1}^\h\na_\h W_{\wt{\Phi}}(t')\|_{L^2_{\rm
v}(L^\infty_\h)}^2\,dt'\Bigr)^{\f12}\\
&\qquad\qquad\qquad\qquad\qquad\qquad\qquad\times \Bigl(\int_0^t\dot{\Theta}(t')\|e^\Psi\D_k^\h W_{\wt{\Phi}}(t')\|_{L^2_+}\,dt'\Bigr)^{\f12}\\
\lesssim &
d_k^22^{-(d-1)k}\|e^\Psi W_{\wt{\Phi}}\|_{\wt{L}^2_{t,\dot{\Theta}}(\cB^{\f{d}2,0})}^2.
\end{split}
\eeno

While by applying Lemma \ref{lem:Bern} and \eqref{eq4.6}, we write
 \beno
\begin{split}
\int_0^t\bigl|&\bigl(e^{\Psi}\D_k^\h [R^\h({w^1},\na_\h
W)]_{\wt{\Phi}}\
|\ e^{\Psi}\D_k^\h W_{\wt{\Phi}}\bigr)_{L^2_+}\bigr|\,dt'\\
\lesssim &2^{\bigl(\f{d-1}2\bigr)k}\sum_{k'\geq
k-3}\int_0^t\|e^\Psi\D_{k'}^\h\na_\h
W_{\wt{\Phi}}(t')\|_{L^2_+}\|\wt{\D}_{k'}^\h
w^1_{\wt{\Phi}}(t')\|_{L^\infty_{\rm v}(L^2_\h)}\|e^\Psi\D_{k}^\h
W_{\wt{\Phi}}(t')\|_{L^2_+}\,dt' \\
\lesssim &2^{\bigl(\f{d-1}2\bigr)k}\sum_{k'\geq
k-3}2^{\bigl(\f{3-d}2\bigr)k'}\int_0^t\dot{\Theta}(t')\|e^\Psi\D_{k'}^\h
W_{\wt{\Phi}}(t')\|_{L^2_+}\|e^\Psi\D_{k}^\h
W_{\wt{\Phi}}(t')\|_{L^2_+}\,dt',
\end{split} \eeno
which together with Definition \ref{def1.1} implies
\ben\label{eq:HH}
\begin{split}
\int_0^t\bigl|\bigl(e^{\Psi}\D_k^\h [R^\h({w^1},&\na_\h
W)]_{\wt{\Phi}}\ |\ e^{\Psi}\D_k^\h
W_{\wt{\Phi}}\bigr)_{L^2_+}\bigr|\,dt'\\
 \lesssim &
d_k2^{-\f{k}2}\Bigl(\sum_{k'\geq
k-3}d_{k'}2^{\bigl(\f{3-2d}2\bigr)k'}\Bigr)\| e^\Psi W_{\wt{\Phi}}\|_{\wt{L}^2_{t,\dot{\Theta}}(\cB^{\f{d}2,0})}^2\\
\lesssim &
d_k^22^{-(d-1)k}\|e^\Psi W_{\wt{\Phi}}\|_{\wt{L}^2_{t,\dot{\Theta}}(\cB^{\f{d}2,0})}^2.
\end{split}
\een

 Therefore, we obtain
 \beq\label{eq4.7}
 \int_0^t\bigl|\bigl(e^{\Psi}\D_k^\h
(w^1\cdot\na_\h W)_{\wt{\Phi}}\ |\ e^{\Psi}\D_k^\h
W_{\wt{\Phi}}\bigr)_{L^2_+}\bigr|\,dt'\lesssim
d_k^22^{-(d-1)k}\|e^\Psi W_{\wt{\Phi}}\|_{\wt{L}^2_{t,\dot{\Theta}}(\cB^{\f{d}2,0})}^2.
\eeq

\no $\bullet$ \underline{Estimate of $\int_0^t\bigl(e^{\Psi}\D_k^\h
(W\cdot\na_\h w^2)_{\wt{\Phi}}\ |\ e^{\Psi}\D_k^\h
W_{\wt{\Phi}}\bigr)_{L^2_+}\,dt'$}\vspace{0.2cm}

By using Bony's decomposition \eqref{Bony} for $W\cdot\na_\h w^2$
for the horizontal variables, we get \beno W\cdot\na_\h
w^2=T^\h_{W}\na_\h w^2+T^\h_{\na_\h w^2}W+R^\h(W, \na_\h w^2). \eeno

Along the same line to the derivation of  (\ref{eq:LH}) and
(\ref{eq:HH}), we find \beno \int_0^t\bigl|\bigl(e^{\Psi}\D_k^\h
[T^\h_{\na_h w^2}W]_{\wt{\Phi}}\ |\ e^{\Psi}\D_k^\h
W_{\wt{\Phi}}\bigr)_{L^2_+}\bigr|\,dt'\lesssim
d_k^22^{-(d-1)k}\|e^\Psi
W_{\wt{\Phi}}\|_{\wt{L}^2_{t,\dot{\Theta}}(\cB^{\f{d}2,0})}^2, \eeno
and \beno \int_0^t\bigl|\bigl(e^{\Psi}\D_k^\h [R^\h({W},\na_\h
w^2)]_{\wt{\Phi}}\ |\ e^{\Psi}\D_k^\h
W_{\wt{\Phi}}\bigr)_{L^2_+}\bigr|\,dt'\lesssim
d_k^22^{-(d-1)k}\|e^\Psi
W_{\wt{\Phi}}\|_{\wt{L}^2_{t,\dot{\Theta}}(\cB^{\f{d}2,0})}^2. \eeno
Whereas applying Lemma \ref{lem:Bern} yields \beno
\begin{split}
\|e^\Psi\D_k^\h [T^\h_{W}\na_\h
w^2]_{\wt{\Phi}}(t)\|_{L^2_+}\lesssim& 2^k\sum_{|k'-k|\leq
4}\|S^h_{k'-1}
W_{\wt{\Phi}}(t)\|_{L^\infty_+}\|e^\Psi \Delta_{k'}^hw^2_{\wt{\Phi}}\|_{L^\infty_t(L^2_+)}\\
\lesssim&2^{-\bigl(\f {d-1}2\bigr) k}d_k\w{t}^{\f14}\|e^\Psi\pa_y
W_{\wt{\Phi}}(t)\|_{\cB^{\f{d-1}2,0}}\|e^\Psi
w^2_{\wt{\Phi}}\|_{\wt{L}^\infty_t(\cB^{\f{d+1}2,0})},
\end{split}
\eeno from which, we infer \beq\label{eq4.10}
\begin{split}
\int_0^t&\bigl|\bigl(e^{\Psi}\D_k^\h [T^\h_{W}\na_h
w^2]_{\wt{\Phi}}\ |\ e^{\Psi}\D_k^\h
W_{\wt{\Phi}}\bigr)_{L^2_+}\bigr|\,dt'
\lesssim d_k^22^{-(d-1)k}\bigl(\w{t}^{\f32}-1\bigr)^{\f12}\\
&\qquad\qquad\times\|e^\Psi
\pa_yW_{\wt{\Phi}}\|_{\wt{L}^2_{t}(\cB^{\f{d-1}2,0})}\|e^\Psi
W_{\wt{\Phi}}\|_{\wt{L}^\infty_{t}(\cB^{\f{d-1}2,0})} \|e^\Psi
w^2_{\wt{\Phi}}(t)\|_{\wt{L}^\infty_t(\cB^{\f{d+1}2,0})}.
\end{split}
\eeq Therefore, in view \eqref{eq4.4a}, we obtain \beq\label{eq4.11}
\begin{split}
 \int_0^t\bigl|\bigl(&e^{\Psi}\D_k^\h
(W\cdot\na_\h w^2)_{\wt{\Phi}}\ |\ e^{\Psi}\D_k^\h
W_{\wt{\Phi}}\bigr)_{L^2_+}\bigr|\,dt'\lesssim d_k^22^{-(d-1)k}\Big(\|e^\Psi W_{\wt{\Phi}}\|_{\wt{L}^2_{t,\dot{\Theta}}(\cB^{\f{d}2,0})}^2\\
&\qquad\qquad+t^\f12\|e^\Psi
w^2_{\Phi^2}\|_{\wt{L}^\infty_t(\cB^{\f{d-1}2,0})}\|e^\Psi
\pa_yW_{\wt{\Phi}}\|_{\wt{L}^2_{t}(\cB^{\f{d-1}2,0})}\|e^\Psi
W_{\wt{\Phi}}\|_{\wt{L}^\infty_{t}(\cB^{\f{d-1}2,0})} \Big).
\end{split}
\eeq

\no $\bullet$ \underline{Estimate of $\int_0^t\bigl(e^{\Psi}\D_k^\h
(\int_0^y\dive_\h w^{1}\,dy'\p_yW)_{\wt{\Phi}}\ |\ e^{\Psi}\D_k^\h
W_{\wt{\Phi}}\bigr)_{L^2_+}\,dt'$}\vspace{0.2cm}

By applying Bony's decomposition and a similar trick of
\eqref{1.12}, we write \beno
\begin{split}
\|e^{\Psi}\D_k^\h &(\int_0^y\dive_\h
w^{1}\,dy'\p_yW)_{\wt{\Phi}}\|_{L^1_t(L^2_+)}\\
\lesssim & \sum_{k'\geq k-N_0}\int_0^t\Bigl(\|S_{k'-1}^\h\dive_\h
w^1_{\wt{\Phi}}(t')\|_{L^1_{\rm
v}(L^\infty_\h)}\|e^\Psi\D_{k'}^\h\p_yW_{\wt{\Phi}}(t')\|_{L^2_+}\\
&\qquad\qquad+\|\D_{k'}^\h\dive_\h w^1_{\wt{\Phi}}(t')\|_{L^1_{\rm
v}(L^2_\h)}\|e^\Psi
S_{k'+2}^\h\p_yW_{\wt{\Phi}}(t')\|_{L^2_{\rm v}(L^\infty_\h)}\Bigr)\,dt'\\
\lesssim & \sum_{k'\geq
k-N_0}\|\w{t'}^{\f14}\|_{L^2_t}\Bigl(\|e^\Psi S_{k'-1}^\h\dive_\h
w^1_{\wt{\Phi}}\|_{L^\infty_t(L^2_{\rm
v}(L^\infty_\h))}\|e^\Psi\D_{k'}^\h\p_yW_{\wt{\Phi}}\|_{L^2_t(L^2_+)}\\
&\qquad\qquad\qquad\qquad+2^k\|e^\Psi\D_{k'}^\h
w^1_{\wt{\Phi}}\|_{L^\infty_t(L^2_+)}\|e^\Psi
S_{k'+2}^\h\p_yW_{\wt{\Phi}}\|_{L^2_t(L^2_{\rm
v}(L^\infty_\h))}\Bigr),
\end{split}
\eeno from which and Lemma \ref{lem:Bern}, we infer \beno
\begin{split}
\|e^{\Psi}\D_k^\h &(\int_0^y\dive_\h
w^{1}\,dy'\p_yW)_{\wt{\Phi}}\|_{L^1_t(L^2_+)}\\
\lesssim & \bigl(\w{t}^{\f32}-1\bigr)^{\f12}\sum_{k'\geq
k-N_0}d_{k'}2^{-\bigl(\f{d-1}2\bigr)k'}\|w^1_{\wt{\Phi}}\|_{\wt{L}^\infty_t(\cB^{\f{d+1}2,0})}\|\p_yW_{\wt{\Phi}}\|_{\wt{L}^2_t(\cB^{\f{d-1}2,0})}\\
\lesssim & t^{\f12}d_{k}2^{-\bigl(\f{d-1}2\bigr)k}\|e^\Psi
w^1_{\wt{\Phi}}\|_{\wt{L}^\infty_t(\cB^{\f{d+1}2,0})}\|e^\Psi
\p_yW_{\wt{\Phi}}\|_{\wt{L}^2_t(\cB^{\f{d-1}2,0})}.
\end{split}
\eeno
So that by virtue of \eqref{eq4.4a}, we obtain
\beq\label{eq4.12}
\begin{split}
&\int_0^t\bigl|\bigl(e^{\Psi}\D_k^\h (\int_0^y\dive_\h
w^{1}\,dy'\p_yW)_{\wt{\Phi}}\ |\ e^{\Psi}\D_k^\h
W_{\wt{\Phi}}\bigr)_{L^2_+}\bigr|\,dt'\\
&\qquad\lesssim t^{\f12} d_k^22^{-(d-1)k}\|e^\Psi
w^1_{\Phi^{1}}\|_{\wt{L}^\infty_t(\cB^{\f{d-1}2,0})}\|e^\Psi
\pa_yW_{\wt{\Phi}}\|_{\wt{L}^2_{t}(\cB^{\f{d-1}2,0})}\|e^\Psi
W_{\wt{\Phi}}\|_{\wt{L}^\infty_{t}(\cB^{\f{d-1}2,0})}.
\end{split}
\eeq

\no $\bullet$ \underline{Estimate of $\int_0^t\bigl(e^{\Psi}\D_k^\h
(\int_0^y\dive_\h W\,dy'\p_yw^2)_{\wt{\Phi}}\ |\ e^{\Psi}\D_k^\h
W_{\wt{\Phi}}\bigr)_{L^2_+}\,dt'$}\vspace{0.2cm}

The estimate of this term is the almost same as that of
(\ref{eq4.7}). Indeed applying Bony's decomposition \eqref{Bony}
gives \beno
\begin{split}
\int_0^t\bigl|&\bigl(e^{\Psi}\D_k^\h (\int_0^y\dive_\h
W\,dy'\p_yw^2)_{\wt{\Phi}}\ |\ e^{\Psi}\D_k^\h
W_{\wt{\Phi}}\bigr)_{L^2_+}\bigr|\,dt'\\
\lesssim &\sum_{k'\geq k-N_0}\int_0^t\Bigl(\|S_{k'-1}^\h \dive_\h
W_{\wt{\Phi}}(t')\|_{L^1_{\rm v}(L^\infty_\h)}\|e^\Psi
\D_{k'}^\h\p_yw^2_{\wt{\Phi}}(t')\|_{L^2_+}\\
&\qquad\qquad+\|\D_{k'}^\h \dive_\h W_{\wt{\Phi}}(t')\|_{L^1_{\rm
v}(L^2_\h)}\|e^\Psi S_{k'+2}^\h\p_yw^2_{\wt{\Phi}}(t')\|_{L^2_{\rm
v}(L^\infty_\h)}\Bigr)\|e^\Psi\D_k^\h
W_{\wt{\Phi}}(t')\|_{L^2_+}\,dt'\\
\lesssim &\sum_{k'\geq
k-N_0}2^{-\bigl(\f{d-1}2\bigr)k'}\int_0^t\w{t'}^{\f14}\|e^\Psi\p_yw^2_{\wt{\Phi}}(t)\|_{\cB^{\f{d-1}2,0}}
\bigl(\|e^\Psi S_{k'-1}^\h \dive_\h W_{\wt{\Phi}}(t')\|_{L^2_{\rm
v}(L^\infty_\h)}\\
&\qquad\qquad\qquad\qquad\qquad\qquad\qquad\qquad+2^{k'} \|e^\Psi
\D_{k'}^\h W_{\wt{\Phi}}(t')\|_{L^2_+}\bigr)\|e^\Psi\D_k^\h
W_{\wt{\Phi}}(t')\|_{L^2_+}\,dt',
\end{split}
\eeno from which and a similar derivation of \eqref{eq4.7}, we
arrive at
 \beq \label{eq4.14} \int_0^t\bigl(e^{\Psi}\D_k^\h
(\int_0^y\dive_\h W\,dy'\p_yw^2)_{\wt{\Phi}}\ |\ e^{\Psi}\D_k^\h
W_{\wt{\Phi}}\bigr)_{L^2_+}\,dt'\lesssim d_k^22^{-(d-1)k}\|e^\Psi
W_{\wt{\Phi}}\|_{\wt{L}^2_{t,\dot{\Theta}}(\cB^{\f{d}2,0})}^2. \eeq

Finally in view of \eqref{eq4.2} and \eqref{eq4.3}, one has \beno
\begin{split}
\int_0^t\bigl|\bigl(e^{\Psi}\D_k^\h (\int_0^y\dive_\h&
W\,dy')_{\wt{\Phi}}\p_y u^s\ |\ e^{\Psi}\D_k^\h
W_{\wt{\Phi}}\bigr)_{L^2_+}\bigr|\,dt'\\
\lesssim &2^k\int_0^t\w{t'}^{\f14}\|e^\Psi\p_yu^s(t')\|_{L^2_{\rm
v}}\|e^\Psi\D_k^\h W_{\wt{\Phi}}(t')\|_{L^2_+}^2\,dt',
\end{split}
\eeno from which and Definitions \ref{def2.2} and \ref{def1.1}, we
infer \beq\label{eq4.15} \int_0^t\bigl|\bigl(e^{\Psi}\D_k^\h
(\int_0^y\dive_\h W\,dy')_{\wt{\Phi}}\p_y u^s\ |\ e^{\Psi}\D_k^\h
W_{\wt{\Phi}}\bigr)_{L^2_+}\bigr|\,dt'\lesssim
d_k^22^{-(d-1)k}\|e^\Psi
W_{\wt{\Phi}}\|_{\wt{L}^2_{t,\dot{\Theta}}(\cB^{\f{d}2,0})}^2. \eeq

Now let us complete the proof of Theorem \ref{th1.1}.

\begin{proof}[Proof of the uniqueness part of Theorem \ref{th1.1}]
Resuming the Estimates (\ref{eq4.7}) and
(\ref{eq4.11})--(\ref{eq4.15}) into (\ref{eq4.5}), then taking
square root of the resulting inequality and  multiplying it by
$2^{\bigl(\f{d-1}2\bigr)k}$, we thus obtain, by  summing over the
final inequality for  $k\in\Z,$ that \beno
\begin{split}
\|e^{\Psi}&W_{\widetilde{\Phi}}\|_{\wt{L}^\infty_t(\cB^{\f {d-1}
2,0})}+c\sqrt{\lambda}\|e^{\Psi}W_{\widetilde{\Phi}}\|_{\wt{L}^2_{t,
\dot \Theta}(\cB^{\f d2,0})}
+\|e^{\Psi}\pa_yW_{\widetilde{\Phi}}\|_{\wt{L}^2_t(\cB^{\f {d-1} 2,0})}\\
&\le C\|e^{\Psi}W_{\widetilde{\Phi}}\|_{\wt{L}^2_{t, \dot
\Theta}(\cB^{\f d2,0})}+
\f12\|e^{\Psi}\pa_yW_{\widetilde{\Phi}}\|_{\wt{L}^2_t(\cB^{\f {d-1} 2,0})}\\
&\qquad+Ct^\f12\Big(\|e^{\Psi}w^1_{\Phi^1}\|_{\widetilde{L}^\infty_t(\cB^{\f
{d-1} 2,0})}
+\|e^{\Psi}w^2_{\Phi^{2}}\|_{\widetilde{L}^\infty_t(\cB^{\f {d-1}
2,0})}\Big)^\f12\|e^{\Psi}W_{\widetilde{\Phi}}\|_{\wt{L}^\infty_t(\cB^{\f
{d-1} 2,0})}.  \end{split}\eeno Hence, by taking $\lambda$ large
enough and $t$ being sufficiently small in the above inequality, we
deduce that
$\|e^{{\Psi}}W_{\widetilde{\Phi}}\|_{\wt{L}^\infty_t(\cB^{\f {d-1}
2,0})}=0$ for some small time $t$. The uniqueness for whole time of
existence can be deduced by a continuous argument. This completes
the proof of Theorem \ref{th1.1}.
\end{proof}

\section*{Acknowledgments}

Part of this work was done when we were visiting Morningside Center of the Academy of Mathematics and
Systems Sciences, CAS. We appreciate the hospitality and the
financial support from MCM. P. Zhang is partially supported by NSF
of China under Grant   11371347, the fellowship from Chinese Academy
of Sciences and innovation grant from National Center for
Mathematics and Interdisciplinary Sciences. Z. Zhang is partially
supported by NSF of China under Grant 11371037, 11421101, Program for New
Century Excellent Talents in University and Fok Ying Tung Education
Foundation.

\end{document}